\newtheorem{theorem}{Theorem}[section]
\newtheorem{proposition}[theorem]{Proposition}
\newtheorem{lemma}[theorem]{Lemma}
\newtheorem{conjecture}[theorem]{Conjecture}
\newtheorem{corollary}[theorem]{Corollary}
\newtheorem{definition}[theorem]{Definition}
\newcounter{note}[section]
\author{
Nikhil Bansal\thanks{University of Michigan, Ann Arbor. {\small bansal@gmail.com}. Supported in part by the NWO VICI grant 639.023.812.} \\
\and
David G. Harris\thanks{
University of Maryland, College Park.
{\small davidgharris29@gmail.com}
}}
\date{}
\title{Some remarks on hypergraph matching and the F\"{u}redi-Kahn-Seymour conjecture}
\begin{document}

	\maketitle
\begin{abstract}
	A classic conjecture of F\"{u}redi, Kahn and Seymour (1993) states that any hypergraph with non-negative edge weights $w(e)$ has a matching $M$ such that $\sum_{e \in M} (|e|-1+1/|e|)\, w(e) \geq w^*$, where $w^*$ is the value of an optimum fractional matching. We show the conjecture is true for rank-3 hypergraphs, and is achieved by a natural iterated rounding algorithm. While the general conjecture remains open, we give several new improved bounds. In particular, we show that the iterated rounding algorithm gives 
	$\sum_{e \in M} (|e|-\delta(e))\, w(e) \geq w^*$, where $\delta(e) = |e|/(|e|^2+|e|-1)$, improving upon the baseline guarantee of $\sum_{e  \in M} |e|\,w(e) \geq w^*$.  
	\end{abstract}
		
	\section{Introduction}
	Let $H = (V,E)$ be a hypergraph  with vertex set $V$ and edge set $E$, where each edge $e \in E$ is a subset of $V$; for simplicity, we assume throughout that $\emptyset \notin E$. We define the \emph{rank} of $H$ to be the largest size of any edge in $E$. We define $E_k$ to be the set of edges of cardinality $k$; we call these \emph{$k$-edges} for brevity.  	For a vertex $v$, we define the edge-neighborhood $N(v) = \{ e : v \in e \}$ to be the edges containing $v$, and $N_k(v) = N(v) \cap E_k$ to be the $k$-edges containing $v$. Likewise, for an edge $e$, we define the (exclusive) edge-neighborhood $N(e) =  \{f: e \neq f,  e \cap f \neq \emptyset \} = \bigcup_{v \in e} N(v) \setminus \{e \}$ and  $N_k(e) = N(e) \cap E_k$.
		
	A matching $M$ of $H$ is a collection of pairwise disjoint edges; equivalently, it is an independent set of the line graph of $H$. 	A fractional matching of $H$ is a function $x: E \rightarrow [0,1]$ satisfying the condition
	 \begin{equation} \sum_{e \in N(v)} x(e) \leq 1  \qquad \text{ for all vertices } v \in V. \label{eq:matching}
	\end{equation} 
	In other words, viewing $x(e)$ as the fractional extent to which an edge is picked, the total fraction of edges containing any vertex $v$ is at most $1$. Clearly, if $x$ takes on integral values, then it corresponds to a matching. We say that $x$ is a \emph{basic} fractional matching if it is an extreme point of the fractional matching polytope given by the constraints \eqref{eq:matching}.
	
	Given a non-negative weight function $w: E \rightarrow \mathbb R_{\geq 0}$ on the edges, we define the \emph{weight} of a fractional matching $x$ by \[
	w(x) = \sum_{e \in E} w(e) x(e).\]
	Given the fundamental role of matchings in combinatorics, optimization and computer science, there has been quite some interest in understanding the integrality gap between the weight of the integral and fractional matchings.
	
	\paragraph{The FKS Conjecture.} With the aim of exactly pinning down the integrality gap, F{\"u}redi, Kahn \& Seymour conjectured a fine-grained relation between integral and fractional matchings \cite{fks}. This is  particularly powerful for settings where the edges have different sizes. There are two formulations of the  conjecture. The original version was stated in the primal form below. As shown in \cite{anegg}, the dual version is  equivalent to the primal version, both algorithmically and combinatorially.
	
	\begin{conjecture}[FKS conjecture \cite{fks}]
	\label{fks-conj}
	Let $H$ be a hypergraph with fractional matching $x$.
	
	(Primal) For any weight	function $w: E \rightarrow \mathbb R_{\geq 0}$, there exists a matching $M$ with
	\begin{equation}
	    \label{eq:primal-conj}
	    	\sum_{e \in M} (|e| - 1 + 1/|e|) w(e) \geq w(x).
	\end{equation}
\indent	(Dual) There is a probability distribution $\Omega$ over matchings of $H$, such that when $M$ is drawn from $\Omega$, every edge $e \in E$ satisfies
	\begin{equation}
	\label{fks:dual}
	    	\Pr(e \in M) \geq \frac{x(e)}{|e| - 1 + 1/|e|}.
	\end{equation}
	\end{conjecture}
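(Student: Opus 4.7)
The plan is to attack the algorithmic primal form via iterated rounding on a basic fractional matching $x$, aiming for a quantitatively weaker bound in general (matching the $|e|-\delta(e)$ guarantee promised in the abstract) and for the exact conjectured factor in the rank-$3$ case. The skeleton of the algorithm is: at each step identify an edge $e^{\ast}$ in $\mathrm{supp}(x)$ with favorable local structure, either commit $e^{\ast}\in M$ (deleting $e^{\ast}$ and all its neighbors) or discard $e^{\ast}$, update $x$ on the residual hypergraph, and recurse. For correctness I would maintain the telescoping invariant that the decrease in $w(x)$ charged to rounding $e^{\ast}$ is at most $(|e^{\ast}|-1+1/|e^{\ast}|)\,w(e^{\ast})$, which on summation yields the primal inequality.

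The starting leverage is the well-known sparsity $|\mathrm{supp}(x)| \le |V|$ at an extreme point, so in any nonempty support there is some vertex $v$ of small support-degree. A baseline rule --- pick the edge $e^{\ast}$ maximizing $x(e^{\ast})$ in its star and commit it --- gives only the trivial $|e^{\ast}|$-factor, since up to $|e^{\ast}|$ tight vertex constraints are destroyed. To push down to $|e^{\ast}|-\delta(e^{\ast})$ with $\delta(e)=|e|/(|e|^2+|e|-1)$, I would look for an $e^{\ast}$ whose commitment still preserves a prescribed amount of residual mass on its neighbors, handing the remainder back to a reweighted fractional matching on the residual hypergraph; the precise shape of $\delta(e)$ strongly suggests setting up a small LP that balances losses across several competing rounding configurations, and then reading $\delta$ off its optimum.

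For rank $3$, I would argue by induction on $|\mathrm{supp}(x)|$ over extreme points, exploiting that a basic solution supported on $2$-edges and $3$-edges has a very restricted local structure. Within this structure the rounding decision reduces to a bounded case analysis, and the conjectured factor $|e|-1+1/|e|$, which equals $3/2$ for $|e|=2$ and $7/3$ for $|e|=3$, should be attainable exactly, using the freedom in either committing or discarding a single edge at each step.

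The main obstacle, I expect, lies in closing the gap between the $|e|-\delta(e)$ bound and the full $|e|-1+1/|e|$ bound for general rank: the two quantities agree to leading order but differ by $\Theta(1)$, and bridging them appears to require a genuinely new structural statement about faces of the fractional matching polytope that is not currently in the literature. I would therefore frame the rank-$3$ resolution and the $\delta(e)$-improvement as the concrete deliverables, and record the full conjecture for general rank as still open.
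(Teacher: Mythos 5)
The statement here is a \emph{conjecture}, and the paper does not prove it; it only establishes (i) the rank-$3$ case and (ii) a weaker general bound $h(k) \geq 1/(k-\delta(k))$. You correctly identify these as the realistic deliverables and correctly flag the general conjecture as open, so at the level of stated goals your plan matches the paper. But at the level of mechanism there are two genuine gaps. First, your rounding scheme ``commit $e^{\ast}$ (deleting $e^{\ast}$ and all its neighbors) or discard $e^{\ast}$'' is not what the paper does; the paper's \textsc{FindMatching} is a local-ratio procedure that removes only the chosen edge $e$, subtracts $w(e)g(f)/g(e)$ from each neighbor $f$'s weight, recurses on the residual hypergraph, and only afterwards decides whether to include $e$ in $M$ based on whether $M'$ hits $N(e)$. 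This gives a clean telescoping identity $\sum_{f\in M}w(f)/g(f)\geq w^{*}$, and it shifts all the difficulty into the choice of a per-edge \emph{discount factor} $g(e)$ --- the actual tuning parameter --- which your sketch never isolates.

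Second, and more fundamentally, the technical core of the paper is a contrapositive characterization you do not have: the algorithm can fail only if some subgraph admits a reduced basic fractional matching $x$ that is ``stuck,'' i.e.\ satisfies $\sum_{f\in N(e)}g(f)x(f) > 1-g(e)x(e)$ for \emph{every} edge $e$. The paper then derives a contradiction by combining these inequalities with the LP-rank bound that a reduced basic solution has $|L|\leq |B(L)|$ (positive edges versus tight vertices they touch). Your sketch mentions $|\mathrm{supp}(x)|\leq |V|$ but not the stuck condition, the reduction to ruling out stuck basic solutions, or this counting argument; without them the ``small LP balancing losses'' you allude to has no well-defined content. Concretely, $\delta(k)=k/(k^2+k-1)$ does not come from optimizing an LP but from the recursion $h(k+1)\leq 1-(k-1)h(k)$ together with monotonicity and $h(k)\leq 1/(k-1+1/k)$ (conditions (A1)--(A3)). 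And the rank-$3$ proof is not a bounded case analysis of local structure: it specializes a bi-uniform machinery where one sums $(kx(e)-1)$ over $2$-edges and $(\ell x(f)-1)$ over $3$-edges, derives opposing bounds from stuckness and from $|L|\leq |B(L)|$, and reduces the contradiction to verifying one rational inequality at the finitely many integer values $n\in\{0,\dots,4\}$ --- integrality which is essential, since the inequality fails for some non-integer $n$. Your induction-on-support outline would not surface this.
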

	
	This conjecture was shown for a number of special cases in \cite{fks}, including \emph{uniform hypergraphs} where all the edges have the same size.  The factor $k-1+1/k$ for edges of size $k$ is optimal for infinitely many values of $k$ (it is tight for projective planes). 
	
	The fractional matching LP can easily be solved efficiently, while finding or approximating a maximum-weight matching is intractable even for rank-3 hypergraphs \cite{kahn}. Thus, the FKS conjecture is closely related to approximation algorithms for matchings. A number of recent papers \cite{brubach,anegg} have shown weakened versions of Conjecture~\ref{fks-conj}; most recently, \cite{anegg} showed the following:
	\begin{proposition}[\cite{anegg}]
	\label{anegg-result}	
	For a hypergraph $H$ with fractional matching $x$, there is a probability distribution $\Omega$ over matchings of $H$, such that when $M$ is drawn from $\Omega$, every edge $e \in E$ satisfies
\[
	\Pr(e \in M) \geq \frac{x(e)}{|e| - (|e|-1) x(e)}. 
\]
		\end{proposition}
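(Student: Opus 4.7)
The plan is to construct the distribution $\Omega$ via a continuous-time greedy rounding based on independent exponential clocks. For each edge $e \in E$, independently sample $T_e$ from an exponential distribution with rate $x(e)$. Order the edges by increasing $T_e$; starting from $M = \emptyset$, process the edges in this order and add $e$ to $M$ whenever $e$ is disjoint from every edge currently in $M$. This clearly returns a matching, and I take $\Omega$ to be the induced distribution.

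To analyze $\Pr(e \in M)$, I would use the following sufficient condition: if $T_e < T_f$ for every $f \in N(e)$, then $e$ is added to $M$. Indeed, in that event every edge conflicting with $e$ is still unprocessed when $e$'s turn comes, so no such edge is yet in $M$. By the standard competing-exponentials identity, this event has probability
\[
\Pr\bigl(T_e = \min\{T_f : f \in \{e\} \cup N(e)\}\bigr) \;=\; \frac{x(e)}{x(e) + \sum_{f \in N(e)} x(f)}.
\]

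It then remains to upper-bound the denominator using the fractional matching constraints. Writing $N(e) = \bigcup_{v \in e} (N(v) \setminus \{e\})$ and applying $\sum_{f \in N(v) \setminus \{e\}} x(f) \leq 1 - x(e)$ at each $v \in e$ yields
\[
\sum_{f \in N(e)} x(f) \;\leq\; \sum_{v \in e} \sum_{f \in N(v) \setminus \{e\}} x(f) \;\leq\; |e|(1 - x(e)),
\]
where the first inequality may overcount any $f$ with $|f \cap e| \geq 2$, but only in our favor. Substituting gives
\[
\Pr(e \in M) \;\geq\; \frac{x(e)}{x(e) + |e|(1-x(e))} \;=\; \frac{x(e)}{|e| - (|e|-1)x(e)},
\]
as required.

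The main obstacle I expect is not the algebra but the slack in the ``sufficient condition'' step: by demanding that $e$ fire strictly first among $\{e\} \cup N(e)$, we throw away the contribution of outcomes where $e$ still survives despite an earlier-firing neighbor (because that neighbor was itself blocked by a third edge). This looseness is precisely what separates Proposition~\ref{anegg-result} from the sharper FKS target $\tfrac{x(e)}{|e|-1+1/|e|}$; improving it will require tracking these indirect blockings rather than strengthening the present argument.
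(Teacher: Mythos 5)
Your proof is correct and follows essentially the same route as the paper: independent exponential clocks with rate $x(e)$, the competing-exponentials identity applied to the event that $e$ fires first among $\{e\} \cup N(e)$, and the bound $\sum_{f \in N(e)} x(f) \leq |e|(1-x(e))$ from the per-vertex fractional matching constraints. The only cosmetic difference is that you run a greedy sweep (which could include extra edges), whereas the paper's Theorem~\ref{turan-thm} simply takes the local-minimum set directly, but since you analyze $\Pr(e \in M)$ via the same local-minimum event, the argument is the same.
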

	As $x(e)$ can be arbitrarily small for a particular edge, the denominator can be arbitrarily close to $|e|$, and in particular this gives the following.	
		\begin{corollary}[\cite{anegg}]
		\label{anegg-result2}
For a hypergraph $H = (V,E)$ with fractional matching $x$ and weight function $w: E \rightarrow \mathbb R_{\geq 0}$, there is a matching $M$ with
	\[
	\sum_{e \in M}  |e|  w(e) \geq w(x).\]
	\end{corollary}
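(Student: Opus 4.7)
The plan is to deduce Corollary~\ref{anegg-result2} directly from Proposition~\ref{anegg-result} via a linearity-of-expectation argument. First I would invoke Proposition~\ref{anegg-result} to obtain a distribution $\Omega$ over matchings of $H$ satisfying $\Pr(e \in M) \geq \frac{x(e)}{|e| - (|e|-1)x(e)}$ for every edge $e$. Next, I would consider the random quantity $\sum_{e \in M} |e|\, w(e)$ for $M \sim \Omega$ and show that its expectation is at least $w(x)$. A standard averaging argument then produces a matching in the support of $\Omega$ that witnesses the claimed inequality.

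The key observation is that, because $x(e) \in [0,1]$ for every edge, the denominator in Proposition~\ref{anegg-result} satisfies $|e| - (|e|-1) x(e) \leq |e|$, so
\[
\Pr(e \in M) \;\geq\; \frac{x(e)}{|e| - (|e|-1)x(e)} \;\geq\; \frac{x(e)}{|e|}.
\]
Multiplying by $|e|\,w(e)$ and summing gives, by linearity,
\[
\mathbb{E}\Bigl[\sum_{e \in M} |e|\, w(e)\Bigr] \;=\; \sum_{e \in E} |e|\, w(e)\, \Pr(e \in M) \;\geq\; \sum_{e \in E} w(e)\, x(e) \;=\; w(x),
\]
which yields the corollary.

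There is essentially no obstacle, since the substantive work lives in Proposition~\ref{anegg-result}, which I am using as a black box. The only conceptual point worth emphasizing is that the lower bound on $\Pr(e \in M)$ from Proposition~\ref{anegg-result} is weakest precisely when $x(e)$ is small (the denominator approaches $|e|$), but the multiplication by $|e|$ in the expected weight of the matching exactly absorbs this loss. Hence the factor $|e|$ in the corollary is the natural limit of this particular reduction, and any improvement on it would require a strengthening of Proposition~\ref{anegg-result} itself rather than a sharper extraction step.
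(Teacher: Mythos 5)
Your proof is correct and follows essentially the same route the paper intends: deduce the pointwise bound $\Pr(e\in M)\geq x(e)/|e|$ from Proposition~\ref{anegg-result} (using $x(e)\leq 1$), then apply linearity of expectation and averaging. The paper leaves this deduction implicit, but there is no substantive difference.
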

	
To better situate the FKS conjecture, let us note a more general result for independent sets in graphs, based on constructions in \cite{anegg,bruggmann}.
\begin{theorem}
\label{turan-thm}
Let $G = (V,E)$ be a graph and $\lambda: V \rightarrow \mathbb R_{\geq 0}$ be a non-negative function. There is a probability distribution $\Omega$ over independent sets $I$ of $G$, such that when $I$ is drawn from $\Omega$, every vertex $v \in V$ satisfies
\[
\Pr(v \in I) \geq \frac{\lambda(v)}{\lambda(v) + \sum_{vu \in E} \lambda(u)}.\]
\end{theorem}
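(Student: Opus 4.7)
The plan is to exhibit an explicit distribution via an exponential-clock construction, which is the standard weighted analog of the random-permutation argument underlying Caro-Wei bounds. First I would assume without loss of generality that $\lambda(v) > 0$ for every $v$, handling the degenerate case at the end: any vertex $v$ with $\lambda(v)=0$ has right-hand side either $0$ or of the form $0/0$, and can safely be excluded from $I$ deterministically (or the bound interpreted as $0$).

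Concretely, for each vertex $v \in V$, draw an independent random variable $T_v$ distributed as an exponential with rate $\lambda(v)$, i.e. with density $\lambda(v) e^{-\lambda(v) t}$ on $[0,\infty)$. Define $I$ to be the set of vertices $v$ such that $T_v < T_u$ for every neighbor $u \in N(v)$. I would then argue in two short steps:

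\textbf{(1) $I$ is independent.} If $uv \in E$ were both in $I$, then $T_u < T_v$ (from $v$'s condition) and $T_v < T_u$ (from $u$'s condition), a contradiction. Ties occur with probability zero.

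\textbf{(2) Computing $\Pr(v \in I)$.} The event $v \in I$ is exactly the event that $T_v$ is the minimum of the independent exponentials $\{T_v\} \cup \{T_u : u \in N(v)\}$ with rates $\lambda(v)$ and $\{\lambda(u)\}_{u \in N(v)}$. By the standard fact that for independent exponentials the probability of a particular one being minimal equals its rate divided by the sum of rates, we get
\[
\Pr(v \in I) \;=\; \frac{\lambda(v)}{\lambda(v) + \sum_{u : uv \in E} \lambda(u)},
\]
which is exactly the required bound (with equality, in fact).

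I do not expect any real obstacle: the argument is a two-line application of properties of independent exponentials once the construction is written down. The only subtlety is the handling of vertices with $\lambda(v)=0$ and the measure-zero event of ties, both of which are resolved trivially. If a purely combinatorial presentation is preferred, one can replace the exponential clocks by a random permutation weighted proportional to the $\lambda(v)$'s (selecting vertices one by one with probability proportional to $\lambda(v)$ among those not yet excluded) and include $v$ iff $v$ precedes all its neighbors; the resulting probability calculation is identical.
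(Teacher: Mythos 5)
Your proof is correct and follows essentially the same approach as the paper: independent exponential clocks with rate $\lambda(v)$, placing $v$ into $I$ when its clock beats all neighbors, then invoking the standard formula for the minimum of independent exponentials. The extra care you take with $\lambda(v)=0$ and measure-zero ties is sound but not needed for the argument's essence.
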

\begin{proof}
For each vertex $v$, draw an independent Exponential random variable $X_v$ with rate $\lambda(v)$. Place $v$ into $I$ if $X_v < X_u$ for all neighbors $u$ of $v$. The claim follows directly from the standard properties of the minimum of exponential random variables.
\end{proof}
Setting $\lambda(v) = 1$ for all $v$ in Theorem~\ref{turan-thm} gives one version of the well-known Caro-Wei theorem where each vertex $v$ goes into the independent set with probability $1/(\text{deg}(v) + 1)$.  Indeed, Theorem~\ref{turan-thm} should be thought of as a fractional version of the Caro-Wei theorem.  To see how Theorem~\ref{turan-thm} implies Proposition~\ref{anegg-result}, note that for any edge $e$ of $H$, the sum of $x(e)$ over neighboring edges $f$ is given by
\[
\sum_{f  \in N(e)} x(f) \leq \sum_{v \in e} \sum_{f \in N(v) \setminus \{e \} } x(f) \leq |e| ( 1 - x(e)).
\]
Thus,  applying Theorem~\ref{turan-thm} to the line graph $G$  of $H$ with $\lambda(e) = x(e)$, we get
\begin{equation}
    \label{eq:basic-bd}
    \Pr(e \in M) \geq \frac{x(e)}{x(e) + \sum_{f \in N(e)} x(f)} \geq \frac{x(e)}{x(e) + |e| (1 - x(e))}  = \frac{x(e)}{|e| - (|e|-1) x(e)}.
\end{equation}
In this sense, the results of Proposition~\ref{anegg-result} and Corollary~\ref{anegg-result2} are essentially only using crude degree statistics of the line graph of $H$. The FKS  conjecture asks whether it is possible to take advantage of underlying matching structure to improve over this bound.

For example, suppose that (wishfully speaking) we were guaranteed that $x(e) \in \{0 \} \cup [ 1/|e|, 1]$ for all $e$. In this case, \eqref{eq:basic-bd} would imply the bound in \eqref{fks:dual} and hence the FKS conjecture. While such a property is too much to hope for in general, it suffices  to have a lower bound on $x(e)$ in a certain average sense. As we discuss shortly, this argument was used already in \cite{fks} to show the FKS conjecture for uniform hypergraphs.

\section{Iterative rounding and our results}
Chan \& Lau \cite{chan} described an iterative rounding procedure to turn the existential bounds of \cite{fks} for uniform hypergraphs into an effective algorithm. We will extend this approach to cover non-uniform hypergraphs  (see Proposition \ref{approx-prop} below for the formal statement). Roughly speaking, this algorithm is based on the ``local-ratio'' technique \cite{BY04}. Given an instance $H$ the algorithm solves the fractional matching LP and considers a suitable edge $e$. It modifies the weight of neighbors of $e$  and removes $e$ to obtain a smaller hypergraph $H'$ and recursively applies the algorithm to $H'$. 

Consider a hypergraph $H=(V,E)$, where in addition each edge has a weight $w(e)$ as well as an associated ``discount factor'' $g(e) \in (0,1]$. 
For concreteness, one can think of $g(e) = 1/|e|$. We can use the following \textsc{FindMatching} algorithm to obtain a high-weight matching for $H$.

\begin{algorithm}[H]
\caption{\sloppy {$\textsc{FindMatching}(H, g, w)$}}
 \textbf{if} $H$ has no edges, \textbf{then} return $\emptyset$ and terminate. \\
\textbf{if} {there is some edge $e \in H$ with $w(e) \leq 0$} \textbf{then} return ${\textsc{FindMatching}(H \setminus \{e \}, g, w)}$ \\
 Solve the fractional matching LP to get a basic fractional matching $x$ maximizing $w(x)$. \\
\If{there is some edge $e \in H$ with $\sum_{f \in N(e)} g(f) x(f) \leq 1 - g(e) x(e)$} {
Choose an arbitrary such edge $e$ \\

Define new weight function $w'$ by 
\[
w'(f) = \begin{cases}
w(f) - w(e) g(f) / g(e) & \text{if $f \in N(e)$} \\
w(f) & \text{if $f \notin N(e)$}
\end{cases}
\] \\

Set $M' \leftarrow {\textsc{FindMatching}(H\setminus\{e \}, g, w')}$ \\
\If { $M' \cap N(e) = \emptyset$ } {
  return $M = M' \cup \{e \}$
  } \Else {  
  return $M = M'$
}
}
Output ``ERROR'' and terminate
\end{algorithm}

Note that while the earlier algorithm \cite{chan} used identical discount factors everywhere, ours can be different for each edge.  Proposition \ref{approx-prop} below analyzes the quality of the solution returned by the algorithm, in terms of the discount factors.
The following definitions are critical for our  analysis.
\begin{definition}
A basic fractional matching $x$ is \emph{stuck for $g$} if it satisfies the condition
\begin{equation}
\label{cx1}
\forall e \in E \qquad \sum_{f \in N(e)} g(f) x(f) > 1 - g(e) x(e).
\end{equation}
\end{definition}
\begin{definition}
Discount factor $g$ is \emph{good for $H$} if no non-empty subgraph of $H$ has a basic fractional matching that is stuck for $g$.
\end{definition}

\begin{proposition}
\label{approx-prop}Algorithm $\textsc{FindMatching}(H,g, w)$ runs in polynomial time, and outputs either ERROR or a matching of $H$. In the former case, $g$ is not good for $H$. In the latter case, the matching $M$ satisfies
$$
\sum_{f \in M} w(f)/g(f) \geq w^*
$$
where $w^*$ is the maximum weight of any fractional matching of $H$.
\end{proposition}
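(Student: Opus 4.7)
The plan is to establish the four claims separately, using induction on $|E|$ for the weight bound via a local-ratio argument. Polynomial runtime is immediate: each recursive call strictly reduces $|E|$ (either by removing a non-positively weighted edge or by removing the chosen non-stuck edge $e$) and does one LP solve plus $O(|E|)$ arithmetic. That the returned $M$ is always a matching is built into the final conditional: $e$ is added only if it is disjoint from $M'$. For the ERROR claim, I would observe that the ERROR line is reached only when the current hypergraph $H^*$ is non-empty, every surviving edge has positive weight, and the LP-optimal basic fractional matching $x$ fails the non-stuck test at every edge of $H^*$. Thus $x$ is stuck for $g$ on the non-empty subgraph $H^*$ of $H$, certifying that $g$ is not good for $H$.

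The substantive part is the weight bound. I would prove by induction on $|E|$ that $\sum_{f \in M} w(f)/g(f) \ge w^*$ whenever the algorithm returns a matching. The base case $|E|=0$ is trivial, and the branch that removes an edge with $w(e) \le 0$ is easy since the LP optimum may take $x(e)=0$ without loss, so $w^*$ is unchanged. In the substantive branch, I would select the non-stuck edge $e$, recurse on $(H-\{e\},g,w')$ to get $M'$ satisfying $\sum_{f \in M'} w'(f)/g(f) \ge w^*(H-\{e\},w')$ by induction, and then compare $w$ with $w'$ on each $f \in M$. A case analysis on the final conditional yields
\[
\sum_{f \in M} w(f)/g(f) \;\ge\; \sum_{f \in M'} w'(f)/g(f) + w(e)/g(e),
\]
where in the case $M = M' \cup \{e\}$ the extra $w(e)/g(e)$ comes from $e$ itself, and in the case $M=M'$ (so $M' \cap N(e) \neq \emptyset$) some $f \in M' \cap N(e)$ supplies the extra $w(e)/g(e)$ via the identity $w(f)/g(f) = w'(f)/g(f) + w(e)/g(e)$.

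What remains is the key inequality $w^*(H,w) \le w^*(H-\{e\},w') + w(e)/g(e)$, and this is where the non-stuck condition enters. I would plug the LP optimum $x$, restricted to $H - \{e\}$ by setting $x(e)=0$, into the $w'$ objective, obtaining
\[
w^*(H-\{e\},w') \;\ge\; \sum_{f \neq e} w'(f) x(f) \;=\; w(x) \;-\; \frac{w(e)}{g(e)} \Bigl( g(e) x(e) + \sum_{f \in N(e)} g(f) x(f) \Bigr).
\]
The parenthesised quantity is precisely the non-stuck slackness at $e$, hence at most $1$, so $w^*(H-\{e\},w') \ge w^*(H,w) - w(e)/g(e)$ as needed. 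The main obstacle, and really the only delicate point, is this algebraic cancellation: the definition $w'(f) = w(f) - w(e) g(f)/g(e)$ on $N(e)$ is engineered precisely so that the multiple $w(e)/g(e)$ of the slackness at $e$ appears cleanly, regardless of how the discount factors $g$ vary across the neighborhood.
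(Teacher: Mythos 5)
Your proof is correct and follows essentially the same local-ratio inductive argument as the paper. The only reorganization is cosmetic: you isolate the inequality $w^*(H,w) \le w^*(H-\{e\},w') + w(e)/g(e)$ as a clean statement about how the LP optimum can degrade across one recursive step, whereas the paper bounds $\sum_{f\in M'} w'(f)/g(f)$ directly against $w(x)$ and folds the same algebra (using that $g(e)x(e)+\sum_{f\in N(e)}g(f)x(f)\le 1$ at a non-stuck edge) into that estimate; both reduce to the identical computation and case analysis on $F = M'\cap N(e)$.
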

\begin{proof} Each iteration of \textsc{FindMatching} clearly runs in polynomial time. Each subproblem has its edge count reduced by one, so there are at most $|E|$ iterations in total.  Since we only add $e$ to $M$ if $N(e) \cap M' = \emptyset$, the output $M$ is clearly a matching.  We now show the two properties by induction on $|E|$. The base case where $E = \emptyset$ are clear in both cases.

For the induction step, observe that if the algorithm reaches line 12, then the fractional matching $x$ is evidently stuck for $g$. Otherwise, suppose that the recursive call on hypergraph $H \setminus \{ e \}$ at line 2 or 7 terminates in ERROR. By induction hypothesis, there is a basic fractional matching $x'$ which is stuck for $g$ for some $H' = (V, E')$ with $E' \subseteq E \setminus \{e \}$; this shows the first claim.

The induction step for the second claim is clear if some edge $e$ has $w(e) \leq 0$. Otherwise, let $M' = \textsc{FindMatching}(H\setminus\{e \}, g, w')$ where $e$ is the edge chosen at line 5, and let $F = M' \cap N(e)$. We calculate:
\begin{equation}
\label{eq:loc-ratio1}
\sum_{f \in M'} w(f)/g(f) = \sum_{f \in M'} w'(f)/g(f) + \sum_{f \in F} \frac{g(f) w(e)/g(e)}{g(f)} =  \sum_{f \in M'} w'(f)/g(f) + |F| w(e)/g(e).
\end{equation}
The restriction of $x$ to $H \setminus \{e \}$ is also a fractional matching. So by the induction hypothesis and using the definition of $w'$, we have
\begin{align*}
\sum_{f \in M'} w'(f)/g(f) &\geq \max_{\text{fractional matchings $x'$}} w'(x') \geq \sum_{f \in E \setminus \{e \}} w'(f) x(f) \\
&  =  \sum_{f \in E} w(f) x(f) - w(e) \Bigl(  \sum_{f \in N(e)} g(f) x(f) / g(e) + x(e) \Bigr),
\end{align*}

The criterion for choosing $e$ at line 5 ensures that $\sum_{f \in N(e)} g(f) x(f) \leq 1 - g(e) x(e)$. Since $w(e) > 0$, substituting this bound into the above formula gives:
\[
\sum_{f \in M'} w'(f)/g(f) 
\geq   w(x) - w(e)/g(e).
\]
Together with \eqref{eq:loc-ratio1}, this implies
\[
\sum_{f \in M'} w(f)/g(f) \geq w(x) +  ( |F| - 1 )  w(e)  / g(e).
\]
Now, if $F = \emptyset$, then $M = M' \cup \{e \}$, and so
\[
\sum_{f \in M} w(f)/g(f) = w(e)/g(e) + \sum_{f \in M'} w(f)/g(f)  \geq w(e)/g(e) + w(x) - w(e)/g(e)  = w(x).
\]
Otherwise, if $|F| > 0$, then $M = M'$ and so
\[
\sum_{f \in M} w(f)/g(f) = \sum_{f \in M'} w(f)/g(f)  \geq  w(x) +   ( |F| - 1 )  w(e)/g(e)  \geq w(x).
\]
In either case, it holds that $\sum_{f \in M} w(f)/g(f) \geq w(x) = w^*$.
\end{proof}

\subsection{Our results}

Given Proposition~\ref{approx-prop}, we make the following conjecture which would, in particular, show the FKS conjecture:
\begin{conjecture}
\label{main-conj1}
If we set $g(e) = \frac{1}{|e| - 1 + 1/|e|}$ for all edges $e$ of $H$, then $g$ is good for $H$.
\end{conjecture}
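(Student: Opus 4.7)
The plan is to argue by contradiction: suppose that some non-empty subgraph $H'$ of $H$ admits a basic fractional matching $x$ that is stuck for $g$, so that
\[
g(e)\, x(e) + \sum_{f \in N(e)} g(f)\, x(f) > 1 \qquad \text{for every edge } e \in H',
\]
and aggregate these local inequalities into a global identity that contradicts the extreme-point structure of $x$.

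The key structural fact is that since $x$ is basic, $|\mathrm{supp}(x)| \leq |V^*|$, where $V^* = \{v : \sum_{e \ni v} x(e) = 1\}$ is the set of tight vertices. The key numerical fact is the identity $g(k)\cdot k = 1 + (k-1)/(k^2 - k + 1)$, which is precisely what distinguishes the choice $g(e) = 1/(|e| - 1 + 1/|e|)$ in the conjecture. My proposed aggregation multiplies the stuck inequality at $e$ by $x(e)$, sums over $e \in \mathrm{supp}(x)$, swaps the order of summation, and applies $\sum_{e \ni v} x(e) \leq 1$ at every vertex. In the linear-hypergraph case, where any two distinct edges share at most one vertex, a short calculation reduces the summed inequality to
\[
\sum_{f \in \mathrm{supp}(x)} \frac{|f|-1}{|f|^2 - |f| + 1}\, x(f)\, \bigl(1 - |f|\, x(f)\bigr) > 0.
\]

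Each summand is nonpositive exactly when $x(f) \geq 1/|f|$. Thus, if one could establish that every edge in $\mathrm{supp}(x)$ satisfies $x(f) \geq 1/|f|$, a contradiction follows at once. This would be a genuine strengthening of the classical F\"uredi--Kahn--Seymour averaging on uniform hypergraphs, which uses the bound $|\mathrm{supp}(x)| \leq |V^*|$ to produce only a \emph{single} edge meeting the threshold; here, non-uniformity of $g$ forces one to pay edge-by-edge in the summed identity.

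The main obstacle is precisely establishing the uniform lower bound $x(f) \geq 1/|f|$ on the entire support of a stuck basic matching. A natural attack is iterative peeling: given $f \in \mathrm{supp}(x)$ with $x(f) < 1/|f|$, remove $f$ from the subgraph and argue that the restriction of $x$ remains a stuck basic matching, then induct on $|E|$; the delicate part is maintaining basicness and stuckness simultaneously under deletion. A further difficulty arises beyond linear hypergraphs, where pairs of edges sharing multiple vertices introduce inclusion--exclusion corrections to the summed identity that must be controlled carefully, perhaps via sunflower-type decompositions of the support or a more refined choice of aggregation weights $c_e$ in place of $c_e = x(e)$. The fact that the authors verify the conjecture only up to rank $3$, where the support structure of basic fractional matchings is constrained enough for direct case analysis, suggests that new ideas beyond this averaging framework will be required to settle the general case.
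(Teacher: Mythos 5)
You should first be aware that what you are trying to prove is explicitly labelled a \emph{conjecture} in the paper and is left open there: the authors prove it only for rank-$3$ hypergraphs (Theorem~2.5, via Lemma~3.5) and cite~\cite{chan} for the uniform case, while for general hypergraphs they establish only the weaker discount factors of Theorem~2.6 and Corollary~2.7. So there is no ``paper proof'' of this statement to compare against, and a complete proof here would itself be a substantial new result.

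Your aggregation identity is algebraically correct for linear hypergraphs, but the plan built around it cannot succeed because the lower bound $x(f)\ge 1/|f|$ you hope to establish on the support of a stuck $x$ points in exactly the \emph{wrong} direction. Already in the paper's intuition section (and more generally in Proposition~3.1) the stuck inequalities, combined with the constraint $\sum_{e\ni v}x(e)\le 1$ summed over $v\in e$, force the \emph{upper} bound $x(e)<1/k$ for every $k$-edge when $g(e)=h^*(k)$; this is an unconditional consequence of stuckness plus feasibility, independent of basicness. So any stuck fractional matching automatically violates $x(f)\ge 1/|f|$ on every edge, and no amount of ``peeling'' or inducting on $|E|$ can recover that bound. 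Relatedly, your aggregation (multiplying by $x(e)$, swapping sums, and applying $\sum_{e\ni v}x(e)\le 1$) nowhere invokes the extremality property $|\mathrm{supp}(x)|\le |V^*|$ that you correctly flag as ``the key structural fact.'' The resulting summed inequality is therefore just another corollary of stuckness plus feasibility, and it is not self-contradictory: it is perfectly compatible with all $x(f)<1/|f|$, which is exactly the regime stuckness puts you in. The paper's actual mechanism is the reverse: derive the per-edge upper bound $x(e)<1/k$ from stuckness, then invoke basicness through the counting inequality $|E'|\le |B|$ (Proposition~2.9 with $L$ the support) and combine with $k\sum_e x(e)\ge |B|$ to obtain $|B|\le k\sum_e x(e)<|E'|$, a contradiction. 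For the non-uniform and rank-$3$ cases the paper refines this by tracking, for each edge, how much of its neighborhood mass comes from same-size versus larger edges (the quantities $a_k(e)$, $a_\ell(e)$, $n_k(f)$ and $y(e)$), which has no analogue in your sketch. Finally, the restriction to linear hypergraphs is not a minor technicality: non-linear overlaps are precisely where the inequality $\sum_{e\in N(f)}x(e)\le |f|(1-x(f))$ you implicitly use degrades to $\sum_{e\in N(f)}|e\cap f|\,x(e)\le |f|(1-x(f))$, and the discrepancy is unbounded. In short, the identity you derive is fine, but the route to a contradiction through it is blocked by Proposition~3.1, and the place where basicness must enter is missing from the argument.
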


It was already shown in \cite{chan} that Conjecture~\ref{main-conj1} holds for uniform hypergraphs. As some further evidence for Conjecture~\ref{main-conj1}, and the FKS conjecture, we show the following:
\begin{theorem}
\label{result1}
If $H$ has rank 3, then Conjecture~\ref{main-conj1} holds for $H$.
\end{theorem}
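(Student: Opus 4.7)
The plan is to argue by contradiction. Suppose $g$ is not good for $H$, and choose a non-empty subgraph $H' \subseteq H$ of minimum edge count that admits a basic fractional matching $x$ stuck for $g$. By minimality (and by passing to the support), I assume $x(e) > 0$ for every edge of $H'$, so the support of $x$ is exactly $E(H')$.

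The argument then splits according to the edge sizes present in $H'$. If $H'$ is 2-uniform it is a graph, and a classical polyhedral fact says the support of a basic fractional matching of a graph is a vertex-disjoint union of single matched edges (with $x(e) = 1$) and odd cycles (with $x(e) = 1/2$). In the first type no other support edge meets $e$, so $Q(e) := g(e)x(e) + \sum_{f \in N(e)} g(f)x(f) = 2/3$; in the second type there are exactly two support-neighbors, each of $x$-value $1/2$, so $Q(e) = \tfrac{2}{3}\cdot\tfrac{3}{2} = 1$. Either way $Q(e) \le 1$, contradicting stuck. If $H'$ is 3-uniform, then Conjecture \ref{main-conj1} for $H'$ is the uniform-hypergraph case, established in \cite{chan}, which also yields a contradiction.

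The substantive case is when $H'$ contains both 2-edges and 3-edges. Here the plan is to identify an edge $e \in E(H')$ and exhibit a non-negative combination of tight vertex constraints $\sum_{f \ni v} x(f) = 1$ whose coefficient on $x(f)$ is at least $g(f)$ for every $f \in \{e\} \cup N(e)$ and whose total weight is at most $1$; such a combination immediately yields $Q(e) \le 1$. The extreme-point property supplies a set $T$ of tight vertices with $|T| \ge |E(H')|$ and a non-singular incidence submatrix, and the idea is to read off the desired combination from the local tight-vertex structure around a well-chosen $e$. A promising candidate is a 2-edge $e$ contained in some 3-edge $f$ of the support: the ``double intersection'' $|e \cap f| = 2$ creates additional slack (of magnitude $x(e)$) in the sum of vertex constraints at $e$'s endpoints, which reduces the naive bound $Q(e) \le 4/3$ down to $Q(e) \le 1$ given the ratio $g_3/g_2 = 9/14 < 1$.

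The main obstacle I expect is the sub-case in which no 2-edge of the support is contained in any 3-edge of the support. In this configuration, the two vertex constraints at the endpoints of a 2-edge $e$ give only $Q(e) \le \tfrac{4}{3}$, which is insufficient, so one must either invoke the basic-matching linear relations in a more refined way (beyond the vertex constraints themselves) or show structurally that the 2-edge component and the 3-edge component of the support decouple, reducing this case to the already-handled uniform Cases. Completing this case analysis, and checking that every residual configuration indeed admits some $e$ with $Q(e) \le 1$, is where I expect the bulk of the work to lie.
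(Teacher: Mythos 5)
Your reductions at the start are fine: passing to a minimal stuck subgraph $H'$, assuming $x(e)>0$ on the support, and disposing of the two uniform cases (the 2-uniform case via the odd-cycle/matched-edge structure of basic fractional matchings of graphs, the 3-uniform case via Chan--Lau) all check out and are consistent with the paper's Propositions~\ref{reduce-prop} and \ref{nosing1}. But the statement is only nontrivial precisely in the mixed case, and there the proposal has a genuine gap that you yourself flag: you have no argument for the sub-case where no 2-edge of the support is contained in a 3-edge of the support. That sub-case is not a residual corner case --- it is the \emph{generic} configuration. A 2-edge $e=\{u,v\}$ meets a 3-edge $f$ in two vertices only when $e\subseteq f$; the typical overlap is a single vertex, in which case your ``double intersection'' slack of magnitude $x(e)$ does not exist, and the naive vertex-constraint sum gives only $Q(e)\le 4/3$, which you correctly note is insufficient. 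So as written the argument does not close, and the sketch of ``a non-negative combination of tight vertex constraints with the right coefficients'' is a plan, not a proof.

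The paper resolves the mixed case with a global counting argument, not by locating a single edge with small $Q(e)$ via a local constraint combination. It forms $S=\sum_{e\in E_2}(2x(e)-1)+\sum_{f\in E_3}(3x(f)-1)$, establishes $S\ge 0$ from the extreme-point inequality $|E'|\le |B|$ (Proposition~\ref{lbprop}), and establishes a contradictory upper bound by summing a refined per-edge stuckness inequality (Proposition~\ref{eq:bounds}). The refinement that makes this work --- and that is entirely missing from your proposal --- is tracking, for each $3$-edge $f$, the number $n_k(f)$ of incident $2$-edges, and using it in Proposition~\ref{x-n} to sharpen the bound on $x(f)$ when $n_k(f)$ is large. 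After manipulation this reduces to verifying the rational inequality \eqref{gfa} in Lemma~\ref{concl-prop} at the integers $n=0,\dots,4$; the integrality of $n$ is essential, since the inequality actually fails for some non-integer $n\in(0,0.8)$. Without this counting of cross-interactions between the two edge classes, the per-vertex constraints alone will not yield the $k-1+1/k$ discount, which is why your local approach stalls.
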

We prove Theorem~\ref{result1} in Section~\ref{res1sec}.  In Section~\ref{res1sec}, we also prove a number of bounds for what we call \emph{bi-uniform hypergraphs,} i.e.,~hypergraphs which have two possible edge sizes. We consider general hypergraphs in Section \ref{res2sec}, and while we are not able to show Conjecture~\ref{main-conj1}, we can show the following result which is strictly stronger than Corollary~\ref{anegg-result2}:
\begin{theorem}
\label{result2}
 Let $h: \mathbb N \rightarrow \mathbb R$ be any function satisfying the following conditions for all $k \geq 2$:
 \vspace{-0.1in}
\begin{enumerate}
\setlength{\itemsep}{0pt}
\item[(A1)] $h(k+1) \leq h(k)$
\item[(A2)] $0 \leq h(k) \leq 1/(k-1+1/k)$
\item[(A3)] $h(k+1) \leq 1 - (k-1)\, h(k) $
\end{enumerate}
The discount factor defined by $g(e) = h(|e|)$ for all edges $e$ is good for any hypergraph $H$.   In particular, there is a probability distribution over matchings $M$ with $\Pr(e \in M) \geq h(|e|)$ for all $e$.
\end{theorem}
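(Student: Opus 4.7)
The plan is to establish the statement in two stages. The main structural claim is that $g(e) = h(|e|)$ is good for every hypergraph $H$; the probability-distribution conclusion then follows via Proposition~\ref{approx-prop} combined with LP duality, analogous to the primal-dual equivalence cited in~\cite{anegg}.

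I would prove that $g$ is good by contradiction. Suppose some nonempty subhypergraph $H' = (V', E')$ admits a basic fractional matching $x$ that is stuck for $g$. Since edges with $x(e) = 0$ do not affect basicness and only strengthen the stuckness condition, I restrict to $E' = S := \{e : x(e) > 0\}$; integral edges (those with $x(e) = 1$) can be removed together with their incident vertices, so that $0 < x(e) < 1$ throughout. The structural input is that, for basic $x$, the tight-vertex set $T$ satisfies $|T| = |S|$ and $\sum_{e \in S} |e \cap T|\, x(e) = |T|$. Combined with $|e \cap T| \leq |e|$, this forces $\sum_{e \in S} |e|\, x(e) \geq |S|$, so some $e^* \in S$ satisfies $|e^*|\, x(e^*) \geq 1$. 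I choose $e^*$ to minimize $|e^*|$ among such edges, and set $\ell = |e^*|$; then every strictly smaller support edge $f$ satisfies $x(f) < 1/|f|$.

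To refute stuckness at $e^*$ I aim to show $\sigma(e^*) := \sum_{f \in S : f \cap e^* \neq \emptyset} h(|f|)\, x(f) \leq 1$. When every $v \in e^*$ has $k(v) := \min_{f \in S : f \ni v} |f| = \ell$, the direct bound $\sigma(e^*) \leq h(\ell)\, [\,\ell - (\ell-1)\, x(e^*)\,] \leq 1$ follows from (A1), the matching constraints, the inequality $x(e^*) \geq 1/\ell$, and (A2). In the general mixed-size regime, I bound $\sigma(e^*) \leq \sum_{v \in e^*} y(v) - (\ell - 1)\, h(\ell)\, x(e^*)$ where $y(v) = \sum_{f \ni v,\, f \in S} h(|f|)\, x(f)$, and estimate each $y(v)$ by splitting the sum at $|f| = k(v)$ versus $|f| > k(v)$, using (A1) to dominate the larger part by $h(k(v)+1)$ and then (A3) in the form $h(k(v)+1) \leq 1 - (k(v)-1)\, h(k(v))$ to collapse the two pieces into a clean bound on $y(v)$. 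The hardest part, I expect, is the case where some $v \in e^*$ lies on a strictly smaller support edge $f_0$: the coefficient $h(|f_0|) > h(\ell)$ inflates $f_0$'s contribution to $\sigma(e^*)$, while the minimality of $e^*$ forces $x(f_0) < 1/|f_0|$ and so limits the $x$-mass on $f_0$. Balancing these per-vertex effects simultaneously via (A3), while accounting for the over-counting correction from neighbors meeting $e^*$ in several vertices, is where the bulk of the work lies; once this balance is carried out, $\sigma(e^*) \leq 1$ follows, contradicting stuckness and showing that $g$ is good.
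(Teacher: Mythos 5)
Your opening reduction (passing to the support, observing that basicness forces $|T| \geq |S|$ and $\sum_{e \in S} |e \cap T|\,x(e) = |T|$, hence some $e^*$ with $|e^*|\,x(e^*) \geq 1$) is sound, as is the final sentence reducing the dual/distribution form to Proposition~\ref{approx-prop} via the primal--dual equivalence. A minor imprecision: basicness gives $|T| \geq |S|$, not $|T| = |S|$; but the averaging only needs the inequality.

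The real difficulty is exactly where you defer it, and I don't think your plan closes there. You want to refute stuckness at a single edge $e^*$ of minimum size $\ell$ among those with $|e|\,x(e) \geq 1$, using per-vertex bounds on $y(v) = \sum_{f \ni v} h(|f|)\,x(f)$. But minimality of $e^*$ only gives $x(f) < 1/|f|$ for individual smaller edges $f$; it does not cap the \emph{total} small-edge mass at a vertex. Concretely, take $\ell = 3$, $x(e^*) = x^* \geq 1/3$, and suppose every $v \in e^*$ has $k(v) = 2$. Then the best per-vertex bound your sketch yields is $y(v) \leq h(2)(1-x^*) + h(3)\,x^*$, so $\sum_{v \in e^*} y(v) \leq 3h(2)(1-x^*) + 3h(3)\,x^*$, and the target $\sigma(e^*) \leq 1$ becomes $3h(2)(1-x^*) + h(3)\,x^* \leq 1$. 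Even using (A3) in the form $h(3) \leq 1 - h(2)$ and taking $h(2) = 2/3$, this reduces to $x^* \geq 3/5$, while you only have $x^* \geq 1/3$. So the ``balancing'' you anticipate being routine actually does not go through with the per-vertex estimates you outline; the heavy weight $h(|f_0|)$ on smaller neighbors overwhelms the gain from $x(e^*) \geq 1/\ell$. You would need the basicness/tight-vertex structure in a much more global way than ``one edge has large $|e|\,x(e)$.''

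The paper's proof is genuinely different on this point: rather than isolating one edge, it fixes $k$ to be the \emph{minimum edge size}, sets $p = h(k)$, $q = h(k+1)$, and considers the aggregate $S = \sum_{e \in E_k} x(e)$. It obtains a lower bound $S \geq \sum_{v \in B'} (1 - y(v))/k$ from tight vertices incident to $k$-edges and an upper bound $S < \sum_{v \in B'} \frac{kp - 1 - y(v) n_k(v)(p-q)}{p(k-1)}$ from summing the stuckness inequality over all $k$-edges and using $|E_k| \leq |B'|$. Comparing these summand-by-summand and invoking $n_k(v) \geq 1$, $p - q \geq kp - 1$ (from A3), and $k(kp-1)/(p(k-1)) \leq 1$ (from A2) yields the contradiction. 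Because $k$ is the smallest size, every neighbor of a $k$-edge has weight at most $p$, so the problematic ``heavier small neighbors'' never arise; this is precisely what your choice of $e^*$ (not necessarily of minimum size overall) fails to guarantee.
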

We prove Theorem~\ref{result2} in Section~\ref{res2sec}.  Notice that $h(k)=1/k$ satisfies each of the conditions (A1), (A2), (A3) with some slack, which will allow us to obtain improved bounds of the form $h(k) = 1/(k-\delta(k))$ for some functions $\delta(k)>0$.  There is no single optimal choice for the function $h$, and we give some sample bounds in the following corollary. 
\begin{corollary}
\label{maincor1}
Let $H = (V,E)$ be a hypergraph.
\begin{enumerate}
\item If $H$ has rank $r$, the following is good for $H$:
\[
g(e) = h_r(|e|) \qquad \text{for } h_r(k) =  \frac{(-1)^{r-k} (k-2)!}{(r-2)! (r + 1/r - 1)} + \sum_{i=1}^{r-k} \frac{ (-1)^{i+1}  (k-2)! }{ (k-2 + i)!}.
\]  
    \item With no restriction on rank, the following is good for $H$:
    \[
    g(e) =  h_{\infty}(|e|) \qquad \text{for } h_{\infty}(k) = \sum_{i=1}^{\infty} \frac{ (-1)^{i+1} (k-2)! }{ (k-2+i)!} 
  \] 
  \item With no restriction on rank, the following is good for $H$:
	\[
   g(e) = \tilde h_{\infty}(|e|) \qquad \text{for } \tilde h_{\infty}(k) =  \frac{1}{k - \delta(k)}, \text{where $\delta(k) := \frac{k}{k^2+k-1}$}.
  \]
\end{enumerate}
\end{corollary}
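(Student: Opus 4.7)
My plan is to apply Theorem~\ref{result2} to each of the three choices by verifying conditions (A1), (A2), (A3). For each function I would extend the domain trivially (e.g., setting $h_r(k) = 0$ for $k > r$, where the boundary conditions then hold automatically), and use the observation that once (A3) is known, (A1) reduces to the inequality $h(k+1) \leq 1/k$, which is implied by the upper bound in (A2). Thus the real content is the upper bound in (A2), together with an auxiliary lower bound where needed to drive the induction.

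The heart of the argument is part (1). The key structural observation is that the closed form for $h_r(k)$ is precisely the solution of the backward recurrence $h_r(k) = (1 - h_r(k+1))/(k-1)$ for $2 \leq k \leq r-1$, with boundary value $h_r(r) = 1/(r-1+1/r)$; this can be read off the formula by inspection, so (A3) holds with equality for $k < r$ and holds at $k = r$ directly from the boundary condition together with the zero extension. To establish (A2), I would carry out a simultaneous backward induction on the pair of inequalities $1/k \leq h_r(k) \leq 1/(k - 1 + 1/k)$, starting from $k = r$. The upper-bound step is driven by the lower bound on $h_r(k+1)$, and vice versa; each direction reduces to a short polynomial inequality in $k$ that holds for all $k \geq 2$. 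I expect the main obstacle to be identifying the correct auxiliary lower bound: the naive bound $h_r(k) \geq 0$ is too weak to propagate the upper bound, while $h_r(k) \geq 1/k$ turns out to be the ``natural'' strength that makes the recurrence preserve the pair of bounds exactly (the polynomial gaps on both sides vanish precisely at $k = 2$).

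For part (2), I would observe that $h_\infty(k) = \lim_{r \to \infty} h_r(k)$, since the partial sums of the series defining $h_\infty$ are exactly the telescoping expansion of the recurrence read from $k$ upward. Hence the bounds $1/k \leq h_\infty(k) \leq 1/(k - 1 + 1/k)$ pass to the limit, yielding (A2) and (A1). Condition (A3) in fact holds with equality, verified directly by rearranging the series: in $1 - (k-1) h_\infty(k)$ the $i = 1$ term cancels the leading $1$, and a single index shift recovers $h_\infty(k+1)$. For part (3), simplifying $\tilde h_\infty(k)$ to $(k^2 + k - 1)/(k(k-1)(k+2))$ and cross-multiplying turns each of (A1), (A2), (A3) into a polynomial inequality in $k$; for example, (A3) becomes $k^3 + 5k^2 + 7k + 2 \leq k^3 + 5k^2 + 7k + 3$. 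Each such inequality is routine for $k \geq 2$, so this part introduces no real difficulty beyond bookkeeping.
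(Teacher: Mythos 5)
Your plan for parts~(1) and~(2) follows essentially the paper's own route: extend $h_r$ by zero for $k > r$, verify (A3) holds with equality for $2 \leq k < r$, establish the two-sided bound $1/k \leq h_r(k) \leq 1/(k-1+1/k)$ by backward induction from $k = r$, and deduce (A1)--(A2); part~(2) then passes to the limit $r \to \infty$ exactly as the paper does. For part~(3) you genuinely diverge: you verify (A1)--(A3) for $\tilde h_\infty$ directly by simplifying $\tilde h_\infty(k) = (k^2+k-1)/(k(k-1)(k+2))$ and cross-multiplying (your arithmetic checks out; for instance (A3) reduces to $k^3+5k^2+7k+2 \leq k^3+5k^2+7k+3$). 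The paper instead dispatches part~(3) with the single observation $\tilde h_\infty(k) \leq h_\infty(k)$, which implicitly invokes a domination principle (if $g$ is good, any pointwise-smaller $g'$ is also good); this principle is not explicitly stated in the final text, and domination alone does not obviously transfer (A1), so your direct verification is in fact more self-contained and rigorous. One small imprecision in your write-up: the remark that ``once (A3) is known, (A1) reduces to $h(k+1) \leq 1/k$'' is not a valid reduction for a general $h$ satisfying (A3) as an inequality --- what you actually need, and correctly identify a few sentences later, is the auxiliary lower bound $h(k) \geq 1/k$, which together with (A3) gives $h(k+1) \leq 1-(k-1)h(k) \leq 1/k \leq h(k)$. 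With that clarification the proposal is correct.
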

The proof of Corollary~\ref{maincor1} appears in Appendix~\ref{maincor1app}.

In order to compare and illustrate these bounds, let us denote the function  corresponding to the FKS conjecture by
\[ h^*(k) = 1/(k-1+1/k)\]
Figure~\ref{tab} shows the baseline value $1/k$ in Corollary~\ref{anegg-result2}, the conjectured function $h^*(k)$, and the values $h_{\infty}(k), \tilde h_{\infty}(k)$ from Corollary~\ref{maincor1}. 
\begin{figure}[H]
\begin{center}
\begin{tabular}{|c||c|c|c|c|}
\hline
$k$ & $1/k$ & $h^*(k) $ & $h_{\infty}(k)$ & $ \gape{\tilde h_{\infty}(k)}$ \\ 
\hline
\hline
2 & $0.5000$ & $0.6667$ & $0.6321$ & $0.6250$ \\
\hline
3 & $0.3333$ & $0.4286$ & $0.3679$ & $0.3667$ \\
\hline
4 & $0.2500$ & $0.3077$ & $0.2642$ & $0.2639$ \\
\hline
5 & $0.2000$ & $0.2381$ & $0.2073$ & $0.2071$ \\
\hline
6 & $0.1667$ & $0.1935$ & $0.1709$ & $0.1708$ \\
\hline
7 & $0.1429$ & $0.1628$ & $0.1455$ & $0.1455$ \\
\hline
8 & $0.1250$ & $0.1404$ & $0.1268$ & $0.1268$ \\
\hline
9 & $0.1111$ & $0.1233$ & $0.1124$ & $0.1124$ \\
\hline
10 & $0.1000$ & $0.1099$ & $0.1009$ & $0.1009$ \\
\hline
20 & $0.0500$ & $0.0525$ & $0.0501$ & $0.0501$ \\
\hline
\end{tabular}
\caption{\label{tab}Table showing $k$ along with a few approximation factors from the different results. 
Terms are rounded to four decimal places for readability. }
\end{center}
\end{figure}
To help parse these expressions, we record a few observations. 
\begin{itemize}
\item We have $h_k(k) = h^*(k) = \frac{1}{k-1+1/k}$, matching the known bound for $k$-uniform hypergraphs.
\item As $h_r(k)$ is given by an alternating sum,  the values $h_{k+2i}(k)$ are decreasing in $i$, and the values $h_{k+1+2i}(k)$ are increasing in $i$. 
They both converge to $h_{\infty}(k)$.  For any fixed $k$, the least value $h_r(k)$ is attained for $r=k+1$ which equals $h_{k+1}(k) = k^2/(k^3-1)$, that is, 
\[
h_r(k) \geq k^2/(k^3-1) >  1/k.\]
\item A simple calculation shows that $\tilde{h}_\infty(k) = h_{k+3}(k) \leq h_{\infty}(k) \leq h^*(k)$. While $h_\infty(k)$ is always larger than $\tilde{h}_{\infty}(k)$, they are quite close asymptotically. More specifically,
\[
h^*(k) = k^{-1} + k^{-2} - O(k^{-4}), \qquad  h_\infty(k),  \tilde h_{\infty}(k) =  k^{-1} + k^{-3} - O(k^{-4}).
\]
\item The value $h_{\infty}(k)$ can be computed in closed form as $h_{\infty}(k) = (-1)^{k} ( D_{k-2} - (k-2)!/\rm{e} )$, where $D_n$ is the number of derangements on $n$ letters and $\rm{e} = 2.718...$ is the base of the natural logarithm. For example, $h_\infty(3)=1/\mathrm{e}, h_\infty(4) = 1 - 2/\mathrm{e}$, etc. 
\end{itemize}

\paragraph{Intuition.}
Before the formal proofs of Theorem~\ref{result1} and Theorem~\ref{result2}, let us provide a high-level overview. The idea  is to show that if the algorithm gets stuck for some fractional solution $x$, then $x$ cannot be a basic solution to the LP defining the matching. This involves combining the inequalities \eqref{cx1} suitably to obtain a contradiction to some property of a basic solution.

Let us start with the simpler case of $k$-uniform hypergraphs (where the FKS conjecture is already known) and show that the algorithm does not get stuck for $g(e) = h^*(k) = 1/(k-1+1/k)$. Consider some basic solution $x$ to the LP, and let $V'$ be the set of tight vertices $v$ with $\sum_{f \in N(v)} x(f) =1$, and let $E'$ be the set of edges with $x(e)>0$. As $x$ is a basic solution, we have $|E'| \leq |V'|$.
We will show that if we assume that the algorithm is stuck, then the inequalities 
\begin{equation}
    \label{eq:discuss-stuck}
h^*(k) \Bigl( x(e) + \sum_{f \in N(e)} x(f) \Bigr) > 1
\end{equation}
can be combined to derive $|V'| < |E'|$, a contradiction.

Indeed, as $x$ is feasible we have $\sum_{f \in N(v)} x(f)\leq 1$ for each $v$; summing over the vertices $v$ in an edge $e$ gives  $\sum_{f} |f\cap e| x(f) \leq k$, and thus $k x(e) + \sum_{f \in N(e)} x(f) \leq k$.
Together with \eqref{eq:discuss-stuck}, this gives
\[   (k-1)x(e) < k-1/h^*(k) = 1-1/k,\]
or equivalently $x(e) < 1/k$. On the other hand, as $H$ is $k$-uniform and the vertices in $V'$ are all tight, summing over $e \in E'$ gives \[
k\sum_{e \in E'} x(e) = \sum_{v \in V} \sum_{e \in N(v)} x(e) \geq \sum_{v \in V'} \sum_{e \in N(v)} x(e) = |V'|.
\]
Together this implies $|V'|\leq k\sum_{e \in E'} x(e) < |E'|$. This is the desired contradiction, which is essentially the proof given in \cite{fks}.

\paragraph{Non-uniform edges.}
The computations for non-uniform edges sizes get  more involved. 
To explain, let us divide the edges into two groups: the edges $E_k$ of size $k$, and edges $E_{>k}$ of size strictly larger than $k$. Suppose that we use discount factors given by $g(e) = h^*(k)$ for $k$-edges and $g(e) = 1/|e|$ (the baseline value) for the larger edges.  

If all the $k$-edges were ``self-interacting,'' that is, had no neighbors in $E_{>k}$, then we could use the argument above for uniform hypergraphs. On the other hand, suppose that an edge $e \in E_k$ has only neighbors outside $E_k$. Then, for this edge $e$, we could calculate
\[
\sum_{f \in N(e)} g(f) x(f) \leq \sum_{f \in N(e)} x(f)/|f| \leq \sum_{f \in N(e)} x(f)/(k+1) \leq \frac{k}{k+1} (1 - x(e)) \leq \frac{k}{k+1},
\]
so that the condition \eqref{cx1} forces $x(e)$ to be large. Consequently, $x$ is not stuck for $g$ (it can make progress on edge $e$).
 
In Theorem~\ref{result2}, we track the amount of self-interaction for the $k$-edges, showing that we always can find some $k$-edge where $x(e)$ is large enough not to get stuck.

In Theorem~\ref{result1}, we track this more carefully: we not only keep track of the self-interaction among $2$-edges, but we also keep track of the self-interaction among $3$-edges. Since the hypergraph has only two edge sizes, the amounts of these self-interactions can be related to each other. 

Before we show Theorems~\ref{result1} and \ref{result2}, we next record some simple observations and definitions to analyze \textsc{FindMatching} and stuck fractional matchings.

\subsection{Simple bounds for stuck fractional matchings}
For a  fractional matching $x$, we say that a vertex $v$ is \emph{tight} if $\sum_{e \ni v} x(e) = 1$. We say that $x$ is \emph{reduced} if $x(e) \in (0,1)$ for all edges $e$.  We define $B \subseteq V$ to be the set of all tight vertices.  For any set of edges $L \subseteq E$, we define $B(L) = \bigcup_{e \in L} e \cap B$. 

\begin{proposition}
\label{lbprop}
If $x$ is reduced, then for any edge-set $L \subseteq E$, there holds $|L| \leq |B(L)|$.
\end{proposition}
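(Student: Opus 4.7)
My plan is to leverage the fact that the reduced fractional matching $x$ in play here is a basic (extreme) point of the matching polytope, and to derive $|L| \leq |B(L)|$ by contradiction. If some set $L \subseteq E$ satisfied $|L| > |B(L)|$, I would construct a nonzero perturbation $z \in \mathbb R^E$ supported on $L$ so that $x \pm \varepsilon z$ are both feasible fractional matchings for all sufficiently small $\varepsilon > 0$, contradicting extremality.

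The perturbation is built by solving the homogeneous system
\[
\sum_{\substack{e \in L\\ v \in e}} z(e) = 0 \qquad \text{for each } v \in B(L),
\]
in the $|L|$ variables $\{z(e) : e \in L\}$. This is $|B(L)|$ linear equations in $|L|$ unknowns, and since $|L| > |B(L)|$ its kernel is nonzero; I pick any nonzero solution $z^\ast$ and extend by $z^\ast(e) = 0$ for $e \notin L$.

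Feasibility of $x \pm \varepsilon z^\ast$ for small $\varepsilon$ can then be checked directly. Because $x(e) \in (0,1)$ strictly and $z^\ast$ is bounded, the box constraints $0 \le x(e) \pm \varepsilon z^\ast(e) \le 1$ survive. For the vertex sums a three-case split suffices: at $v \in B(L)$, the defining equations give $\sum_{e \ni v} z^\ast(e) = \sum_{e \in L,\, v \in e} z^\ast(e) = 0$, so tightness is preserved; at $v \in B \setminus B(L)$, no edge of $L$ meets $v$, so $z^\ast$ vanishes on $N(v)$ and the constraint is untouched; at $v \notin B$, the strict inequality $\sum_{e\ni v} x(e) < 1$ has slack that absorbs any small perturbation. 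Hence $x$ is the midpoint of two distinct feasible matchings, contradicting basicness.

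The main obstacle I anticipate is really just keeping straight the role of basicness: a purely ``reduced'' matching in the literal sense $x(e)\in(0,1)$ need not satisfy the conclusion on its own (a single self-loop with $x=1/2$ already gives $|L|=1>0=|B(L)|$), so the argument must invoke that $x$ is an extreme point of the matching polytope, as supplied by the context of this section. Once that is in hand, the argument is a clean LP dimension count and no combinatorial subtlety arises.
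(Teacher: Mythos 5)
Your proof is correct and rests on the same underlying idea as the paper's: a basic (extreme) point of the matching polytope cannot admit a nonzero feasible perturbation, so the tight vertex constraints that actually touch the coordinates in $L$ must have rank at least $|L|$, forcing $|B(L)| \geq |L|$. The paper states this as a direct count of tight constraints versus fractional variables in the LP restricted to $L$, whereas you make the contrapositive explicit by constructing the perturbation vector $z^\ast$ from the null space of the $B(L)\times L$ incidence system and verifying that $x \pm \varepsilon z^\ast$ stays feasible; these are equivalent presentations of the same LP-dimension argument, and your case analysis on $v \in B(L)$, $v \in B \setminus B(L)$, $v \notin B$ is the same trichotomy the paper uses.
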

\begin{proof}
Let us consider the restriction of the fractional matching LP to the edges in $L$, i.e.,~all the values $x(e)$ for $e \notin L$ are held fixed. This LP can be described as:
\begin{equation}
\label{a1}
\forall v \in V \qquad \sum_{e \in L \cap N(v)} x(e) \leq 1 - \sum_{f\in N(v) \setminus L} x(f)
\end{equation}
where we emphasize that the terms on the RHS are viewed as fixed constants.

If $v \notin B$, then the constraint \eqref{a1} is not tight. If $v \in B \setminus B(L)$, then the LHS of \eqref{a1} is zero. Thus, when restricting our attention to the variables $x(e)$ for $e \in L$, the LP has at most one tight non-trivial constraint for each $v \in B(L)$. Since we are assuming that $x(e) \in (0,1)$ for all $e$, it has a fractional variable for each $e \in L$. Since $x$ is a basic LP solution, the number of fractional variables $|L|$ is at most the number of tight constraints $|B(L)|$.
\end{proof}

\begin{proposition}
\label{nosing1}
If a basic fractional matching $x$ is stuck for $g$, then $H$ has no singleton edges.
\end{proposition}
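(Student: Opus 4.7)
The plan is to derive a contradiction directly from the feasibility constraint at the lone vertex of the supposed singleton edge. Suppose, toward contradiction, that $H$ contains an edge $e = \{v\}$ with $|e|=1$, and that the basic fractional matching $x$ is stuck for some $g$. Since $e$ has only one vertex, its exclusive edge-neighborhood is exactly $N(e) = N(v) \setminus \{e\}$.

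Applying the stuck condition \eqref{cx1} to $e$ and using that $g(f) \leq 1$ for every edge (recall $g: E \to (0,1]$), I would write
\[
\sum_{f \in N(v) \setminus \{e\}} x(f) \;\geq\; \sum_{f \in N(v) \setminus \{e\}} g(f)\, x(f) \;>\; 1 - g(e)\, x(e) \;\geq\; 1 - x(e).
\]
Rearranging and adding $x(e)$ to both sides gives $\sum_{f \in N(v)} x(f) > 1$, which directly violates the matching feasibility constraint \eqref{eq:matching} at the vertex $v$. Hence no singleton edge can exist.

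There is no real obstacle here; the only point that needs a moment of care is the edge case where $g(e) = 1$, in which the slack between $1 - g(e) x(e)$ and $1 - x(e)$ vanishes, but the strict inequality in the definition of stuck still yields the strict violation of feasibility. The argument uses neither basicness of $x$ nor any property of $g$ beyond $g(f) \in (0,1]$, so in fact the proposition holds for arbitrary (not necessarily basic) stuck fractional matchings.
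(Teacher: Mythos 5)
Your proof is correct and follows essentially the same chain of inequalities as the paper: both use $g(f)\leq 1$, the feasibility of $x$ at $v$, and the identity $N(e)=N(v)\setminus\{e\}$ to contradict the stuck condition at the singleton edge. Your closing remark that basicness of $x$ is never used is accurate (the paper's proof also does not use it), but this is a cosmetic observation rather than a different argument.
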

\begin{proof}
Suppose that $e = \{v \}$ is such an edge. On the other hand, since $x$ is a fractional matching, we have $\sum_{f \in N(v)} x(f) \leq 1$. Since $N(e) = N(v) \setminus \{e \}$, this implies
\[
\sum_{f  \in N(e)} g(f) x(f) = \sum_{f \in N(v) \setminus \{e \} } g(f) x(f) \leq \sum_{f \in N(v) \setminus \{e \} } x(f) \leq 1 - x(e) \leq 1 - g(e) x(e),
\]
a contradiction.
\end{proof}

\begin{proposition}
\label{reduce-prop}
If $g$ is not good for $H$, then there is some non-empty subgraph $H' = (V, E')$ with a reduced basic fractional matching $x$ which is stuck for $g$.
\end{proposition}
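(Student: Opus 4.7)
My plan is to start from the hypothesis that $H$ is not good for $g$, which by definition gives a non-empty subgraph $H'' = (V, E'')$ with a basic fractional matching $x$ that is stuck for $g$, and then pass to the sub-subgraph $H' = (V, E')$ where $E' = \{e \in E'' : x(e) > 0\}$. The goal is to show that the restriction $x|_{E'}$ is reduced, still stuck for $g$ on $H'$, and still a basic fractional matching of $H'$.

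The first thing I would verify is that being stuck itself forces $x(e) < 1$ for every $e \in E''$. Indeed, if $x(e) = 1$, then for any $v \in e$ the matching constraint at $v$ forces $x(f) = 0$ for every $f \in N(v) \setminus \{e\}$, hence for every $f \in N(e)$; the stuck condition at $e$ then reads $0 > 1 - g(e)$, contradicting $g(e) \in (0,1]$. Combined with the definition of $E'$, this already gives $x(e) \in (0,1)$ for every $e \in E'$, so the restricted matching is automatically reduced. That $E'$ is itself non-empty is equally quick: if $x \equiv 0$ on $E''$, the stuck condition at any $e \in E''$ degenerates to $0 > 1$.

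Checking that $x|_{E'}$ is still stuck for $g$ on $H'$ is immediate, since deleting zero-valued edges from the sum $\sum_{f \in N(e)} g(f) x(f)$ does not change its value, and the neighborhood $N_{H'}(e)$ in the smaller hypergraph equals $N_{H''}(e) \cap E'$.

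The one piece of actual work is verifying that $x|_{E'}$ remains a basic feasible solution of the matching polytope of $H'$. Since $x$ is basic for $H''$, the LP system admits $|E''|$ linearly independent tight constraints at $x$, namely the tight non-negativity constraints $x(e) \geq 0$ for $e \in E'' \setminus E'$ together with tight vertex constraints $\sum_{f \ni v} x(f) \leq 1$ indexed by some set $B_0 \subseteq V$. Because the tight non-negativity rows are coordinate vectors supported on $E'' \setminus E'$, the tight vertex rows must already contribute rank at least $|E'|$ when projected onto the $E'$ coordinates. In the smaller polytope, each $v \in B_0$ still yields a tight constraint $\sum_{e \in N(v) \cap E'} x(e) = 1$ (the discarded edges contributed nothing to the original tight sum), and by the same projection the resulting rows have rank at least $|E'|$. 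Hence $x|_{E'}$ has at least $|E'|$ linearly independent tight constraints in $H'$ and is basic. I expect this linear-independence bookkeeping to be the only step that needs any real care; the rest is a definition-chase.
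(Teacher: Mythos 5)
Your proof is correct, and it takes a genuinely different route from the paper's. The paper argues by \emph{minimality}: it takes an edge set $E'$ of minimal cardinality admitting a stuck basic fractional matching, shows first that $N(e)\neq\emptyset$ for every $e\in E'$ (else the stuck inequality at $e$ fails outright), then rules out $x(e)=0$ because deleting such an edge would yield a strictly smaller stuck subgraph (contradiction to minimality), and finally rules out $x(e)=1$ because $N(e)\neq\emptyset$ would force a neighbor with $x$-value $0$. Your proof instead restricts directly to the support $E'=\{e: x(e)>0\}$ and does the LP bookkeeping in the open: you show $x(e)<1$ directly from the stuck inequality (a cleaner argument, since it avoids the detour through $N(e)\neq\emptyset$), and then verify explicitly that the restriction remains a vertex of the smaller polytope by the projection argument on the tight constraint rows. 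It is worth noting that the paper's minimality step tacitly relies on the very fact you spell out — that deleting a zero-valued edge from a basic fractional matching yields a basic fractional matching of the smaller hypergraph — since otherwise the restriction of $x$ to $H''$ would not be a legitimate witness contradicting minimality. So what your approach ``buys'' is that it makes this implicit LP fact explicit; what the paper's minimality framing buys is brevity, by absorbing the basicness preservation into the choice of $E'$.
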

\begin{proof}
Suppose that $E' \subseteq E$ has minimal size such that some basic fractional matching $x$ on hypergraph $H' = (V,E')$ is stuck for $g$ and $E'$ is non-empty. We claim that $N(e) \neq \emptyset$ for all $e \in E'$. For, if $N(e) = \emptyset$, then edge $e$ would satisfy
\[
0 = \sum_{f \in N(e)} g(f) x(f) > 1 - g(e) x(e),
\]
which is a contradiction since the RHS is non-negative. 

So we may assume that $|E'| > 1$. We claim now that $x$ is reduced. For, suppose that $x(e) = 0$ for some edge $e$. If we define the non-empty edge set $E'' = E' \setminus \{e \}$, we see that every edge $e'' \in E''$ still satisfies
\[
\sum_{f \in E'' \cap N(e'')} g(f) x(f) = \sum_{f \in E' \cap N(e'')} g(f) x(f) > 1 - g(e) x(e'').
\]
Thus, the restriction of $x$ to $H'' = (V, E'')$ is also stuck. This contradicts minimality of $E'$.

Finally, suppose that $x(e) = 1$ for some edge $e$. Since $N(e) \neq \emptyset$, there must be some edge $e' \in N(e)$; but then necessarily $x(e') = 0$, which we have already ruled out.
\end{proof}

\section{Bounds for bi-uniform hypergraphs}
\label{res1sec}
Let us consider a hypergraph $H$ with a reduced basic fractional matching $x$  where all edges have sizes $k, \ell$ where $k < \ell$. We use discount factors given by $$
g(e) = \begin{cases} p & \text{for $|e| = k$} \\
q & \text{for $|e| = \ell$}
\end{cases}
$$
for values $p, q$. We may assume $p \geq q$ as $k < \ell$. Most of the steps in the argument hold more generally for arbitrary edge sizes; we later  specialize to $k=2, p = h^*(2) = 2/3, \ell=3, q= h^*(3) = 3/7$  to obtain Theorem~\ref{result1}. 

For any edge $e$ and any integer $i \in \{k, \ell \}$ we define \[
a_i(e) = \sum_{f \in N_i(e)} x(f). 
\]
We will adopt the following notation throughout this section: we use $e$ to denote a $k$-edge and $f$ to denote a $\ell$-edge. For a vertex $v$, let $x(v)$ denote the total $x$-value of edges containing $v$.  We begin with the following upper bounds on $x(e)$ and $x(f)$.
\begin{proposition}
\label{eq:bounds}
If $x$ is stuck for $g$, then all edges $ e \in E_k, f \in E_{\ell}$ satisfy
\[
x(e) < \frac{k p-1 - (p - q) a_{\ell}(e)}{(k-1)p} \qquad \text{and} \qquad x(f)  < \frac{\ell q -1 + (p - q) a_k(f)}{(\ell-1)q}.  
\]
\end{proposition}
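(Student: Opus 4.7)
The plan is to derive both bounds from two ingredients: the stuck condition \eqref{cx1} specialized to the bi-uniform setting, and a vertex-summed feasibility inequality that converts local LP slack into an upper bound on the neighborhood mass of an edge.

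First I would write out the stuck condition at a $k$-edge $e$ in the notation $a_i$: since the only possible edge sizes are $k$ and $\ell$, the condition $\sum_{f' \in N(e)} g(f') x(f') > 1 - g(e) x(e)$ becomes
\[
p \cdot a_k(e) + q \cdot a_\ell(e) > 1 - p \cdot x(e).
\]
Next, I would obtain the feasibility inequality $a_k(e) + a_\ell(e) \leq k(1 - x(e))$. This comes from summing the constraint $\sum_{e' \ni v} x(e') \leq 1$ over the $k$ vertices $v \in e$; this yields $\sum_{e'} |e' \cap e| x(e') \leq k$, where the $e' = e$ term contributes $k x(e)$, and each $e' \in N(e)$ contributes at least $x(e')$ since $|e' \cap e| \geq 1$.

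To combine these, I would use the identity $p \cdot a_k(e) + q \cdot a_\ell(e) = p \cdot (a_k(e) + a_\ell(e)) - (p-q) a_\ell(e)$, which is valid because $p \geq q$. Plugging the feasibility bound into the stuck condition yields
\[
p k (1 - x(e)) - (p - q) a_\ell(e) \geq p \cdot a_k(e) + q \cdot a_\ell(e) > 1 - p \cdot x(e),
\]
and rearranging to isolate $x(e)$ (using $p(k-1) > 0$) gives the claimed upper bound on $x(e)$. The argument for $f \in E_\ell$ is symmetric, but now I would split as $p \cdot a_k(f) + q \cdot a_\ell(f) = q \cdot (a_k(f) + a_\ell(f)) + (p-q) a_k(f)$, use $a_k(f) + a_\ell(f) \leq \ell(1 - x(f))$, and rearrange to isolate $x(f)$.

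The calculation is essentially linear algebra once the two ingredients are identified; I do not expect any real obstacle. The only small subtlety is picking the right regrouping of $p \cdot a_k + q \cdot a_\ell$ in each case (pulling out a factor of $p$ for the $k$-edge bound and a factor of $q$ for the $\ell$-edge bound) so that the term with the larger coefficient absorbs the full feasibility bound while leaving a favorable sign on the correction $(p-q) a_\ell(e)$ or $(p-q) a_k(f)$.
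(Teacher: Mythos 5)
Your proposal is correct and matches the paper's proof essentially line for line: the same two ingredients (the stuck condition written in the $a_k, a_\ell$ notation, and the vertex-summed feasibility bound $a_k + a_\ell \leq |e|(1-x(e))$), followed by the same elimination — of $a_k(e)$ in the $k$-edge case and $a_\ell(f)$ in the $\ell$-edge case. One minor remark: the regrouping identity $p\,a_k + q\,a_\ell = p(a_k + a_\ell) - (p-q)a_\ell$ is a pure algebraic rearrangement and does not require $p \geq q$, so that parenthetical justification is unnecessary (though harmless).
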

\begin{proof}
The condition \eqref{cx1} for $x$ being stuck implies that 
\[ 1 - p a_k(e) - q a_\ell(e) < p x(e) \quad \forall e \in E_k
\qquad \text{and} \qquad 1 -  p a_k(f) - q a_\ell(f)  < q x(f) \quad \forall f \in E_{\ell}.  \]
As the total $x$-value for each vertex is at most $1$, we have the bounds:
\[ a_k(e) + a_\ell(e) \leq k (1-x(e)) \quad \forall e \in E_k     \qquad \text{and} \qquad  a_k(f) + a_\ell(f) \leq \ell(1-x(f)) \quad \forall f \in E_{\ell}. \]
Eliminating $a_\ell(f)$ from the inequalities for $f$, and eliminating $a_k(e)$ from the inequalities for $e$ gives the two claimed bounds.
\end{proof}

For an $\ell$-edge $f$, let $n_k(f) = | N_k(f) |$ denote the number of $k$-edges (possibly zero) incident to $f$. Then Proposition~\ref{eq:bounds} above implies the following  upper bound on $x(f)$ in terms of $n_k(f)$.
\begin{proposition}
\label{x-n}
If $x$ is stuck for $g$, then for any $f \in E_{\ell}$,  
\[ x(f) \Bigl( 1 + n_k(f) \frac{(p-q)^2}{(k-1) (\ell-1) p q} \Bigr) < \frac{\ell q-1}{(\ell-1) q} + n_k(f) \frac{ (kp -1) (p-q)}{(k-1)(\ell-1) p q}.  \] 
\end{proposition}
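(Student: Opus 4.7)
The plan is to combine the two bounds from Proposition~\ref{eq:bounds} with a simple double-counting observation. Start from the bound
\[ x(f) < \frac{\ell q - 1 + (p-q) a_k(f)}{(\ell-1)q}, \]
so it suffices to derive a sufficiently strong upper bound on $a_k(f) = \sum_{e \in N_k(f)} x(e)$. Summing the Proposition~\ref{eq:bounds} bound for $x(e)$ over the $n_k(f)$ edges $e \in N_k(f)$ yields
\[ a_k(f) < \frac{n_k(f)(kp-1)}{(k-1)p} - \frac{(p-q)}{(k-1)p} \sum_{e \in N_k(f)} a_\ell(e). \]

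The key step is the following observation. By definition, $a_\ell(e) = \sum_{f' \in N_\ell(e)} x(f')$; and for every $k$-edge $e \in N_k(f)$, the edge $f$ itself lies in $N_\ell(e)$ (since $f \in E_\ell$ intersects $e$). Since all other terms are nonnegative, each summand $a_\ell(e)$ is at least $x(f)$, giving
\[ \sum_{e \in N_k(f)} a_\ell(e) \geq n_k(f)\, x(f). \]
Plugging this into the bound on $a_k(f)$ and then into the bound on $x(f)$ produces a linear inequality that has an $x(f)$ term on both sides; collecting these terms on the LHS yields exactly the claimed inequality after dividing through by $(\ell-1)q$.

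I expect no real obstacle here: once one spots that $f$ appears in $N_\ell(e)$ for every $e \in N_k(f)$, the argument is a routine elimination. The only thing to be a little careful about is preserving the strict inequalities when summing and substituting (which is fine because $n_k(f) \geq 0$ and the coefficients in Proposition~\ref{eq:bounds} are of the correct sign, since $p \geq q$ implies $(p-q)/((k-1)p) \geq 0$).
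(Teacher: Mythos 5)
Your proof is correct and uses the same key idea as the paper: observing that for each $e\in N_k(f)$, the edge $f$ itself contributes to $a_\ell(e)$, so $a_\ell(e)\geq x(f)$. The only cosmetic difference is that you sum the $x(e)$ bounds first and then lower-bound $\sum_{e\in N_k(f)} a_\ell(e)$ by $n_k(f)\,x(f)$, whereas the paper applies $a_\ell(e)\geq x(f)$ term-by-term before summing; the two orderings are algebraically identical.
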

\begin{proof}
Consider any $k$-edge $e$ incident to $f$. Then by the bound on $x(e)$ in Proposition \ref{eq:bounds},
\[ x(e) <    \frac{k p-1 - (p - q) x(f)}{(k-1)p},\]
where we use that $a_\ell(e) \geq x(f)$ as $f$ is adjacent to $e$ (and that $p \geq q$). As this holds for every $e \in N_k(f)$, we get
\[ a_k(f) = \sum_{e \in N(f)} x(e) < n_k(f) \Bigl( \frac{k p-1 - (p - q) x(f)}{(k-1)p} \Bigr).\]

If we substitute this upper bound for $a_k(f)$ into the bound of Proposition~\ref{eq:bounds} for $x(f)$, we get 
\[
x(f) < \frac{\ell q -1}{(\ell - 1) q} + \frac{p - q}{ (\ell - 1) q} \cdot  n_k(f) \Bigl( \frac{k p-1 - (p - q) x(f)}{(k-1)p} \Bigr)
\]
which, upon rearranging, is equivalent to the stated inequality.
\end{proof}

Let us now consider the sum
\[ S = \sum_{e \in E_k} (k x(e) -1) + \sum_{f \in E_{\ell}} (\ell x(f) -1).\]
Our strategy is to show upper and lower bounds for $S$, under the assumption that $x$ is basic, reduced, and stuck for $g$. We then argue that these bounds are contradictory, and hence the desired matching exists. 
We show the lower bound and upper bound on $S$ in turn.

\paragraph{Lower bound.}  First, as $x(v) \leq 1$ for any vertex $v$, and \[
\sum_{e \in E_k} k x(e)  + \sum_{f \in E_{\ell}} \ell x(f) = \sum_v x(v),
\]
Proposition~\ref{lbprop} gives
\begin{equation}
\label{eq:ss1}
S = \sum_{v \in V} x(v)  - |E| \geq \sum_{v \in B} x(v) -  |E| = |B| - |E| \geq 0.
\end{equation}

\paragraph{Upper bound.} By the bound on $x(e)$ in Proposition \ref{eq:bounds}, we have
\[ S \leq \sum_{ e \in E_k} \Bigl( \Bigl(\frac{k^2 p-k}{(k-1)p}-1 \Bigr) -  \frac{k(p -q)}{(k-1) p} a_\ell(e) \Bigr) + \sum_{f \in E_{\ell}} (\ell x(f) -1).\] 
Writing \[\sum_{e \in E_k} a_\ell(e) = \sum_{e \in E_k}  \sum_{f \in N_{\ell}(e)} x(f) = \sum_{f \in E_{\ell}} x(f) \sum_{e \in N_k(f)} 1 = \sum_{f \in E_{\ell}} x(f) n_k(f), \]
this gives
\begin{equation}
\label{eq:s}
 S \leq \sum_{e \in E_k} \Bigl( \frac{k^2 p-k}{(k-1)p}-1 \Bigr) + \sum_{f \in E_{\ell}} \Bigl( \Bigl(\ell -n_k(f) \frac{k(p -q)}{(k-1) p} \Bigr) x(f) -1 \Bigr). 
\end{equation}

Putting together the bounds on $S$, we can now conclude our desired contradiction as follows.
\begin{lemma}
\label{concl-prop}
Let $T =  \frac{p (k-1) \ell}{(p - q) k}$. Suppose that $p \leq h^*(k)$ and for every integer $n \in \{0, \dots, \lfloor T \rfloor \}$ it holds that
\begin{equation}
\label{gfa}
\frac{p q (k-1) (\ell-1)+n
   (p-q)^2}{p (k-1) \ell - k n
   (p-q)} \geq \frac{p (k-1) (q \ell-1)+n (p-q)
   (p k-1)}{p  (k-1)}   
 \end{equation}

Then, there cannot be a reduced basic fractional matching $x$ which is stuck for $g$.
\end{lemma}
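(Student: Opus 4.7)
The plan is to assume a reduced basic fractional matching $x$ stuck for $g$ exists and derive a contradiction by showing that the quantity
\[
S := \sum_{e \in E_k} (k x(e) - 1) + \sum_{f \in E_\ell} (\ell x(f) - 1)
\]
is simultaneously $\geq 0$ and strictly less than $0$. The lower bound $S \geq 0$ is already \eqref{eq:ss1} (via Proposition \ref{lbprop} applied to $L = E$). So the content is to push the upper bound \eqref{eq:s} below zero.

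From \eqref{eq:s}, the $k$-edge contribution simplifies to $|E_k| \cdot \frac{p(k^2-k+1) - k}{(k-1)p}$, which is $\leq 0$ because the hypothesis $p \leq h^*(k) = k/(k^2-k+1)$ is exactly the inequality $p(k^2-k+1) \leq k$. For each $\ell$-edge $f$ the contribution in \eqref{eq:s} is $c(n_k(f))\, x(f) - 1$, where $c(n) := \ell - \frac{k(p-q) n}{(k-1)p}$, and one checks that $c(n) \leq 0$ exactly when $n \geq T$. The argument then splits on $n := n_k(f)$. If $n > \lfloor T \rfloor$, then $n > T$ (since $n$ is an integer), so $c(n) < 0$ and, combined with $x(f) \geq 0$, the contribution is at most $-1 < 0$. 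If $n \leq \lfloor T \rfloor$ with $c(n) > 0$, then Proposition \ref{x-n}, after clearing denominators by $(k-1)(\ell-1)pq$, gives the strict bound
\[
x(f) < \frac{(k-1)p(\ell q - 1) + n(kp-1)(p-q)}{(k-1)(\ell-1)pq + n(p-q)^2}.
\]
Here $1/c(n) = \frac{(k-1)p}{(k-1)p\ell - nk(p-q)}$ is positive, so the desired inequality $c(n) x(f) \leq 1$ is equivalent, after cross-multiplying and taking reciprocals, to exactly the assumed condition \eqref{gfa}. In the boundary case where $T \in \mathbb{Z}$ and $n = T$, the bound $c(n) x(f) - 1 = -1$ is trivial. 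Summing over all edges, the upper bound in \eqref{eq:s} is $\leq 0$, and the strictness in Proposition \ref{eq:bounds} or \ref{x-n} (whichever applies to a non-empty edge class) yields $S < 0$, the desired contradiction.

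The primary obstacle is routine algebraic verification that substituting the strict bound of Proposition \ref{x-n} into $c(n)\, x(f) \leq 1$ yields exactly \eqref{gfa}; this requires careful cross-multiplication while tracking positivity of the factor $(k-1)p\ell - nk(p-q)$ throughout $n \in \{0,\dots,\lfloor T \rfloor\}$, which is precisely what the range in the lemma statement provides. All conceptual ingredients — the threshold $T$, the split on $n_k(f)$, and the shape of \eqref{gfa} — are already encoded in Propositions \ref{eq:bounds} and \ref{x-n} together with the baseline upper bound \eqref{eq:s}, so once the algebra is lined up, the contradiction is immediate.
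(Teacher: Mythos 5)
Your proof is correct and uses essentially the same ingredients and approach as the paper's: combine the lower bound $S \geq 0$ from \eqref{eq:ss1} with the upper bound \eqref{eq:s}, note that the $k$-edge summands are non-positive via $p \leq h^*(k)$, and use Proposition~\ref{x-n} together with \eqref{gfa} to control the $\ell$-edge summands. The paper phrases the step as a contrapositive (since $S \geq 0$, some $\ell$-edge summand must be $\geq 0$, and then \eqref{at1} contradicts \eqref{gfa}), whereas you argue directly that \eqref{gfa} forces every $\ell$-edge summand to be strictly negative and then invoke strictness to get $S < 0$; you also handle the $E_\ell = \emptyset$ case via strictness of Proposition~\ref{eq:bounds} rather than by falling back to the $k$-uniform argument as the paper does, which is a modest streamlining but not a genuinely different route.
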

\begin{proof}
If we examine upper bound (\ref{eq:s}), we see that the term $\frac{k^2 p-k}{(k-1)p}-1$ is non-positive since $p \leq h^*(k)$. We also must have $S \geq 0$ from (\ref{eq:ss1}). Now, if $E_{\ell} = \emptyset$ (and there are no summands $f$), then in fact $H$ is a $k$-uniform hypergraph. We have already shown that $x$ cannot be stuck in this case since $p \leq h^*(k)$. So, there must be an edge $f \in E_{\ell}$ for which the corresponding summand in (\ref{eq:s}) is non-negative. For this edge $f$, let us denote $n = n_k(f)$, i.e.
\begin{equation}
\label{hppl1}
\Bigl( (\ell -n \frac{k(p -q)}{(k-1) p} \Bigr) x(f) -1  \geq 0.
\end{equation}
Since $x(f) \geq 0$, we clearly must have \[
 n \leq \frac{p (k-1) \ell}{(p - q) k} = T,\]
 and then (\ref{hppl1}) is equivalent to
 \[
x(f) \geq \frac{ p (k-1) }{p (k-1) \ell - (p - q) k n}.\]
Substituting this lower bound for $x(f)$ into the bound of Proposition~\ref{x-n}, we conclude
\begin{equation}
\label{at1}
\Bigl( \frac{ p (k-1) }{p (k-1) \ell - (p - q) k n}  \Bigr)  \Bigl( 1 + n \frac{(p-q)^2}{(k-1) (\ell-1) p q} \Bigr) < \frac{\ell q-1}{(\ell-1) q} + n \frac{ (k p -1) (p-q)}{(k-1)(\ell-1) p q}.
\end{equation}
With some algebraic simplifications,  this inequality is precisely what is ruled out by (\ref{gfa}) for the given value $n$.
\end{proof} 

\paragraph{Proof of Theorem~\ref{result1}.} Now, let $H=(V,E)$ be a rank-$3$ hypergraph. Assume for contradiction that Theorem~\ref{result1} fails for $H$. Then by Proposition~\ref{reduce-prop} there is a subgraph $H'$ of $H$ with a reduced basic fractional matching $x$ stuck for $g$. The subgraph $H'$ also has rank at most $3$, and by Proposition~\ref{nosing1} has no singleton edges.   Thus, $H'$ is a bi-uniform hypergraph with $k = 2, \ell = 3$.

Now consider Lemma~\ref{concl-prop} for our desired bounds for $p = h^*(k)  = 2/3$ and $q=h^*(\ell) = 3/7$. The condition $p \leq h^*(k)$ is obviously satisfied. Here,  $T =  4.2$, and we just need to verify (\ref{gfa}) at the values $n = 0, 1, 2, 3, 4$, which is easily done. This shows Theorem~\ref{result1}. 

 (Note that (\ref{gfa}) is false for some \emph{non-integer} values $n$ in the relevant range, in particular, is it false for $n \in (0,0.8)$.)
 
 \paragraph{Other edge sizes.} We cannot show Conjecture~\ref{main-conj1} for other values of $k, \ell$. Nonetheless, if we set $p = h^*(k)$, we can still obtain non-trivial bounds on $q$.  For instance, for $\ell = k+1$, we have the following crisp estimate:
\begin{theorem}
\label{thm:k+1}
If $\ell = k+1$, then no basic fractional matching $x$ can be stuck with $$
p = h^*(k) = \frac{1}{k-1+1/k}, \qquad q = \frac{1}{k+1/k}.
$$  

Note that this $q$ is quite close to $h^*(k+1)=\frac{1}{k+1/(k-1)}$. In particular,  
$$
h^*(k+1) - q = k^{-4}+ O(k^{-5}).
$$
\end{theorem}
\begin{proof}
Plugging the bounds for $p$ and $q$ into Lemma~\ref{concl-prop} gives $T= \frac{k^4 - 1}{k^2}$. It can be routinely checked that (\ref{gfa}) holds for all real numbers $n \in [0, T]$. For example, it is a rational inequality that can be verified using an algorithm for decidability of real-closed fields. We used Mathematica's {\tt Reduce} command to check this.
\end{proof}

The proof of Theorem~\ref{thm:k+1} did not use the integrality of $n$, which was critical in proving Theorem~\ref{result1}. For fixed values $k, \ell$, we can obtain better bounds compared to Theorem~\ref{thm:k+1}.  For example, for $k=3$, $\ell=4$, we can set  $p = h^*(3) = 3/7$ and numerically optimize $q$  to satisfy Lemma~\ref{concl-prop} (trying all possible integer values of $n$) to obtain $q = 0.30508$; by contrast, Theorem~\ref{thm:k+1} would give $p = h^*(3), q = 0.3$.  Similarly, for $k=4,\ell=5$, we can set $p = h^*(4) = 4/13$, and $q = 0.23656$; by contrast, Theorem \ref{thm:k+1} gives $p = h^*(4) $, and $q = 0.23529$.

\section{Proof of Theorem~\ref{result2}}
\label{res2sec}
Recall here that we are setting $g(e) = h(|e|)$ where $h$ satisfies the bounds (A1), (A2), (A3).  Throughout this section, we assume for contradiction that $H$ is a minimal counter-example, i.e., $H$ is non-empty and there is a basic fractional matching $x$ of $H$ which is stuck for $g$.

 In light of Proposition~\ref{reduce-prop}, we assume that $x$ is reduced. By Proposition~\ref{nosing1},  $H$ then has no singleton edges, and so let $k \geq 2$ be the size of the smallest edge in $E$. 

We define $p = h(k), q = h(k+1)$. By properties (A1), (A2), (A3), we have that
\begin{equation}
\label{r1}
q \leq 1 - (k-1)p, \qquad 0 \leq  q \leq p \leq \frac{1}{k-1+1/k}.
\end{equation}

Let us write $E_{>k}, N_{>k}(v), N_{>k}(e)$ to denote the respective sets of edges with strictly more than $k$ vertices. Let $V'$ denote the set of vertices $v$ with $N_k(v) \neq \emptyset$. We also define $B' = B(E_k) = B \cap V'$ and $n_k(v) = |N_k(v)| \geq 1$ for all $v \in V'$. For each vertex $v$ and edge $e$, define
\[
y(v) = \sum_{f \in N_{>k}(v)} x(f), \qquad y(e) = \sum_{v \in e} y(v) = \sum_{f\in N_{>k}(e)} |f\cap e|\, x(f),
\]
so that $y(v)$ is the $x$-value of the edges containing $v$ which have size strictly larger than $k$, and $y(e)$ is the sum of $y(v)$ over all $v\in e$.

Consider the sum \[
S = \sum_{e \in E_k} x(e).\]

Our strategy is to compute upper and lower bounds on $S$. We then combine them and argue that they cannot both hold simultaneously, and hence the counter-example $H$ is impossible. 

\paragraph{Lower bound.} This is straightforward.  As each edge in $E_k$ has cardinality exactly $k$ and all its vertices are in $V'$, we can use double-counting to get:
\begin{align}
\label{vvt1}
S &=  \frac{1}{k} \sum_{v \in V'} \sum_{e \in N_k(v)} x(e) \geq \frac{1}{k} \sum_{v \in B'} \sum_{e \in N_k(v)} x(e) = \sum_{v \in B'} (1 - y(v))/k.
\end{align}

\paragraph{Upper bound.} First, we claim that every edge $e \in E_k$ satisfies 
\begin{equation}
\label{vvt2}
x(e) < (k p - 1 - (p - q) y(e))/(p (k-1)).
\end{equation}
To show this,  note that since $x$ is stuck, the condition \eqref{cx1} and the fact that $g(e)=p$ implies
\begin{align*}
 1 - p x(e) & <  \sum_{f \in N(e)} g(f)\,x(f) 
             \leq  p \sum_{f \in N_k(e)} x(f) + q \sum_{f \in N_{>k}(e)} x(f)   \\
            & \leq  p  \sum_{f \in N_k(e)} |f\cap e|\, x(f) + q \sum_{f \in N_{>k}(e)} |f\cap e|\,x(f) \\
						& \leq  p  ( k-k x(e) - y(e) ) + qy(e),  
\end{align*}
where the second inequality uses that $q = h(k+1) \geq g(f)$ for all $f \in N_{>k}(e)$.
(\ref{vvt2}) then follows upon rearranging the above inequality. 

We then sum inequality (\ref{vvt2}) over the edges $e \in E_k$ to obtain:
\begin{align*}
S &< \sum_{e \in E_k} \frac{ k p - 1 - (p-q) y(e)}{p (k-1)} = \frac{ |E_k| (k p - 1) - (p-q) \sum_{e \in E_k} y(e)}{p (k-1)} \\
& = \frac{ |E_k| (k p - 1) - (p-q) \sum_{e \in E_k} \sum_{v \in e} y(v)}{p (k-1)} =  \frac{ |E_k| (k p - 1) - (p-q) \sum_{v \in V'}  y(v) n_k(v)}{p (k-1)}.
\end{align*}
By Proposition~\ref{lbprop} we have $|E_k| \leq |B'|$. Noting that $p-q \geq 0, p\geq 1/k$ and $B' \subseteq V'$, we thus have:
\begin{equation}
\label{ff3}
S <  \frac{ |B'| (k p - 1) - (p-q) \sum_{v \in B'} y(v) n_k(v)}{p (k-1)} = \sum_{v \in B'}  \frac{ k p - 1 - y(v) n_k(v) (p-q)}{p (k-1)}. 
\end{equation}

\paragraph{Putting them together.} We now combine the upper bound inequality (\ref{ff3}) with the lower bound inequality (\ref{vvt1}) to get:
\[
\sum_{v \in B'} (1 - y(v))/k \leq S <  \sum_{v \in B'}  \frac{ k p - 1 - y(v) n_k(v) (p-q)}{p (k-1)}. 
\]
In particular, one of the summands on the LHS must be smaller than the corresponding summand on the RHS. Thus, there must exist some vertex $v \in B'$ with
\begin{equation}
\label{ff4}
1 - y(v) < k \Bigl( \frac{ k p - 1 - y(v) n_k(v) (p-q)}{p (k-1)}  \Bigr).
\end{equation}
Since $v \in B'$, we have $n_k(v) \geq 1$. Also, the first condition in \eqref{r1} implies $p-q \geq kp-1 \geq 0$. Thus, the right side above is at most 
\begin{equation}
\label{ff5}
\bigl( 1 - y(v) \bigr)  \Bigl( \frac{ k (k p - 1)}{p (k-1)}  \Bigr).
\end{equation}
The second condition in \eqref{r1} is equivalent to $\frac{ k (k p - 1)}{p (k-1)}  \leq 1$; thus, (\ref{ff5}) is at most $1 - y(v)$. Thus, this is a contradiction to the existence of $H$, completing the proof of Theorem~\ref{result2}.

\section{Acknowledgments}
Thanks to Rico Zenklusen for some helpful explanations about their paper \cite{anegg}. Thanks to the journal reviewers for useful suggestions and revisions.

\appendix

\section{Proof of Corollary~\ref{maincor1}}
\label{maincor1app}
\begin{proposition}
\label{maincor11}
For any value $r$, let us extend the definition of function $h_r$ by setting
$$
h_r(k) =  \begin{cases}
\frac{(-1)^{r-k} (k-2)!}{(r-2)! (r + 1/r - 1)} + \sum_{i=1}^{r-k} \frac{ (-1)^{i+1}  (k-2)! }{ (k-2 + i)!} & \text{if $2 \leq k \leq r$} \\
0 & \text{if $k > r$} 
\end{cases}
$$

This function $h_r$ satisfies properties (A1)--(A3).
\end{proposition}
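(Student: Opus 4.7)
The argument rests on establishing the clean backward recursion
\[
(k-1)\, h_r(k) + h_r(k+1) = 1 \qquad \text{for } 2 \leq k \leq r-1,
\]
together with the boundary value $h_r(r) = h^*(r) = 1/(r-1+1/r)$. I would verify this by direct substitution of the closed form: the two boundary terms carry signs $(-1)^{r-k}$ and $(-1)^{r-k-1}$ and cancel after being scaled, and after multiplying through by the appropriate factorials and reindexing the second alternating sum via $i \mapsto i-1$, the resulting sums telescope, leaving only the lone term $(k-1)!/(k-1)! = 1$. With this recursion in hand, (A3) is almost immediate: it holds with equality for $2 \leq k \leq r-1$; for $k = r$ it reduces to $0 \leq 1/(r^2-r+1)$; and for $k > r$ it is trivial because $h_r$ vanishes on both sides.

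\textbf{Two-sided induction.} I would then prove by backward induction on $k \in \{r, r-1, \ldots, 2\}$ the coupled pair of invariants (U) $h_r(k) \leq h^*(k)$ for $2 \leq k \leq r$, and (L) $h_r(k) \geq 1/(k^2-3k+3)$ for $3 \leq k \leq r$. The base case $k = r$ is (U) with equality; (L) reduces to the polynomial inequality $r^3 - 4r^2 + 4r - 1 \geq 0$, which is positive at $r = 3$ (value $2$) and has positive derivative $(3r-2)(r-2)$ on $[2,\infty)$. In the inductive step, the recursion $h_r(k) = (1 - h_r(k+1))/(k-1)$ translates (U) at $k$ into $h_r(k+1) \geq 1/(k^2-k+1)$, which is \emph{exactly} (L) at $k+1$; and it translates (L) at $k$ (for $k \geq 3$) into $h_r(k+1) \leq (k-2)^2/(k^2-3k+3)$, which via (U) at $k+1$ suffices to be checked for $h^*(k+1)$ in place of $h_r(k+1)$, reducing to $k^4 - 4k^3 + 3k^2 + 1 \geq 0$; this is positive at $k = 3$ (value $1$) and monotone increasing thereafter.

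\textbf{Deducing (A1) and (A2).} Given (U), property (A1) $h_r(k+1) \leq h_r(k)$ follows: from the recursion it is equivalent to $k\, h_r(k+1) \leq 1$, and by (U), $h_r(k+1) \leq h^*(k+1) < 1/k$; the case $k \geq r$ is immediate. The upper half of (A2) is (U) itself; the nonnegativity $h_r(k) \geq 0$ follows from the recursion together with $h_r(k+1) \leq h^*(2) < 1$, and trivially for $k > r$.

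\textbf{Main obstacle.} The delicate point is the \emph{coupling}: proving (U) at $k$ requires a lower bound at $k+1$, while proving (L) at $k$ requires an upper bound at $k+1$, so both invariants must be propagated together. Moreover, (L) is actually \emph{false} at $k = 2$ (since $1/(k^2-3k+3) = 1 > 2/3 \geq h_r(2)$), but this causes no harm because (L) is only ever invoked in the induction for $k+1 \geq 3$, i.e.\ to establish (U) at $k \geq 2$; the chain closes exactly one step before (L) would break. Verification of the two polynomial inequalities $r^3 - 4r^2 + 4r - 1 \geq 0$ and $k^4 - 4k^3 + 3k^2 + 1 \geq 0$ on their respective integer domains is routine.
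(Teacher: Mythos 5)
Your proof is correct and follows essentially the same route as the paper: establish the backward recursion $h_r(k+1) = 1 - (k-1)h_r(k)$ (which gives (A3) with equality for $2 \le k \le r-1$), then run a backward induction to sandwich $h_r(k)$ between a lower bound and $h^*(k)$, and finally read off (A1) and (A2). The only real difference is the choice of lower-bound invariant: the paper propagates the cleaner bound $1/k \le h_r(k) \le h^*(k)$, which has a little slack at each step (one needs $h_r(k+1) \ge 1/(k^2-k+1)$ to push (U) down, and $1/(k+1) \ge 1/(k^2-k+1)$ holds with room to spare for $k \ge 2$), whereas you chose the exactly tight invariant $h_r(k) \ge 1/(k^2-3k+3)$, which eliminates the slack but forces you to exclude $k=2$ from (L) and to handle nonnegativity of $h_r(2)$ separately; it also makes the polynomial verifications ($r^3-4r^2+4r-1\ge 0$, $k^4-4k^3+3k^2+1\ge 0$) a touch heavier than the paper's. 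The paper's version is slightly more economical, and the stronger bound $h_r(k) \ge 1/k$ is reused directly in deducing (A1); but both arguments are sound and essentially equivalent in structure.
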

\begin{proof}
Let us fix $r$ and write $h = h_r$ for brevity. Properties (A1)--(A3) can be easily checked for $k \geq r$ (noting that $h(r) = \frac{1}{r - 1 + 1/r})$.  We claim that (A3) holds with equality $k = 2, \dots, r - 1$, i.e., 
\begin{equation}
\label{sa0}
h(k+1) = 1 - (k-1) h(k).
\end{equation} 
To verify this, we calculate
\begin{align*}
h(k+1) + h(k)(k-1) &= \frac{ (-1)^{r-(k+1)}( (k+1-2)!  }{(r-2)! (r + 1/r -1 )} + \sum_{i=1}^{r-k-1} \frac{ (-1)^{i+1} (k+1-2)! }{(k+1-2+i)!} \\
& \qquad \qquad + \frac{ (-1)^{r-k} (k-1) (k-2)!  }{(r-2)! (r + 1/r -1 )} + (k-1) \sum_{i=1}^{r-k} \frac{ (-1)^{i+1} (k-2)! }{(k-2+i)!} \\
&= \sum_{i=1}^{r-k-1} \frac{ (-1)^{i+1} (k+1-2)! }{(k+1-2+i)!} + (k-1) \sum_{i=1}^{r-k} \frac{ (-1)^{i+1} (k-2)! }{(k-2+i)!} \\
&= \sum_{i=1}^{r-k-1} \frac{ (-1)^{i+1} (k-1)! }{(k-1+i)!} + \sum_{j=0}^{r-k-1} \frac{ (-1)^{j} (k-1)! }{(k-1+j)!} \qquad \text{substituting $j = i-1$} \\
&= \sum_{i=1}^{r-k-1} \Bigl( \frac{(-1)^{i+1} (k-1)! + (-1)^{i}(k-1)!}{(k-1+i)!} \Bigr) + \frac{ (-1)^0 (k-1)! }{(k-1+0)!} = 1,
\end{align*}
where the second equality follows as the first and third terms in the first equality cancel.

We now turn to verifying property (A2) for $k = 2, \dots, r -1$. To do so, we show the stronger inequality for $k \leq r$: \begin{equation}
\label{sa1}
\frac{1}{k} \leq h(k) \leq \frac{1}{k-1+1/k}.
\end{equation}

We show (\ref{sa1}) by induction on $k$. The base case $k = r$ is clear since $h(r) = \frac{1}{r-1+1/r}$, . For the induction step, note that by (\ref{sa0}) we have $h(k) = \frac{1 - h(k+1)}{(k-1)}$. By the induction hypothesis, this implies
\[
 \frac{1 - \frac{1}{k + 1/(k+1)}}{k-1}  \leq h(k) \leq \frac{1 - \frac{1}{k+1}}{k-1}. 
\]
 Now observe that 
\[ \qquad \qquad \qquad  \qquad \qquad  \frac{1 - \frac{1}{k + 1/(k+1)}}{k-1} \geq \frac{1}{k}  \quad \text{and} \quad \frac{1 - \frac{1}{k+1}}{k-1}  \leq \frac{1}{k-1+1/k}  \qquad   \text{ for } k \geq 2,\]
 which concludes the proof of property (A2).
 
 Finally, for property (A1), we need to show that $h(k+1) \leq h(k)$; by (\ref{sa0}), it suffices to show that $1 - h(k) (k-1) \leq h(k)$, which holds since $h(k) \geq 1/k$.
\end{proof}

\begin{proposition}
\label{maincor12}
The function $h = h_{\infty}$ satisfies properties (A1)--(A3).
\end{proposition}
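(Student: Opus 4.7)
The plan is to deduce Proposition~\ref{maincor12} directly from Proposition~\ref{maincor11} by taking a pointwise limit as $r \to \infty$. Fix any $k \geq 2$. Comparing the closed form for $h_r(k)$ (valid once $r \geq k$) with the defining series for $h_\infty(k)$, the only discrepancy is the ``boundary'' term $\frac{(-1)^{r-k}(k-2)!}{(r-2)!\,(r-1+1/r)}$, whose absolute value tends to $0$ as $r\to\infty$, while the partial sum $\sum_{i=1}^{r-k}\frac{(-1)^{i+1}(k-2)!}{(k-2+i)!}$ converges to $h_\infty(k)$ by convergence of the alternating series. Hence $h_r(k)\to h_\infty(k)$ for every fixed $k \geq 2$.

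Since Proposition~\ref{maincor11} guarantees that each $h_r$ satisfies (A1)--(A3), all three inequalities pass to $h_\infty$ in the (pointwise) limit: $h_r(k+1)\leq h_r(k)$ yields (A1), $0\leq h_r(k)\leq \frac{1}{k-1+1/k}$ yields (A2), and $h_r(k+1)\leq 1-(k-1)h_r(k)$ yields (A3). In fact, (A3) holds with equality for $h_r$ whenever $k\leq r-1$ (this is equation~(\ref{sa0}) in the preceding proof), so for any fixed $k$, the equality $h_r(k+1) = 1 - (k-1)h_r(k)$ holds for all sufficiently large $r$, and the identity $h_\infty(k+1) = 1 - (k-1)\,h_\infty(k)$ holds in the limit.

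The only mildly non-routine step is justifying that the boundary term vanishes, which is immediate from the $(r-2)!$ in its denominator. I expect no real obstacle: pointwise limits of non-strict inequalities remain non-strict, and we only need (A1)--(A3) at each fixed $k$. If a self-contained proof avoiding Proposition~\ref{maincor11} is preferred, the recurrence $h_\infty(k+1) = 1-(k-1)h_\infty(k)$ can be verified directly by the index-shift $i\mapsto i-1$ and the same telescoping used for $h_r$ (this gives (A3) with equality); then the bound $h_\infty(k)\geq 1/k$ follows by pair-grouping in the alternating series, the upper bound $h_\infty(k)\leq \frac{1}{k-1+1/k}$ in (A2) follows by induction via the recurrence, and (A1) follows from (A3) combined with $h_\infty(k)\geq 1/k$.
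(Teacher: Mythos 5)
Your proposal is correct and takes essentially the same approach as the paper: both pass to the limit $h_r \to h_\infty$ pointwise and observe that (A1)--(A3) are non-strict inequalities preserved under limits. You are somewhat more careful than the paper in spelling out why $h_r(k) \to h_\infty(k)$ (explicitly noting that the boundary term with $(r-2)!$ in its denominator vanishes), which is a welcome detail, but it is the same argument.
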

\begin{proof}
For any value $k$,  the sum defining $h_{\infty}(k)$ is an alternating series whose terms decrease monotonically,  hence $h_{\infty}(k) = \lim_{r \rightarrow \infty} h_r(k)$. Properties (A1)--(A3) are defined in terms of non-strict inequalities; since each $h_r(k)$ individually satisfies these properties, by continuity their limit satisfies them as well.
\end{proof}

Propositions~\ref{maincor11} and \ref{maincor12}, combined with Theorem~\ref{result2}, imply Corollary~\ref{maincor1} parts (1) and (2).  To show Corollary~\ref{maincor1} part (3), observe that $\tilde h_{\infty}(k) \leq h_{\infty}(k)$ for all $k$. 

\end{document}